\numberwithin{equation}{section}
\theoremstyle{plain}
\newtheorem*{theorem*}{Theorem}
\newtheorem*{proposition*}{Proposition}
\newtheorem{theorem}{Theorem}[section]
\newtheorem{lemma}[theorem]{Lemma}
\newtheorem{cor}[theorem]{Corollary}
\theoremstyle{definition}
\newtheorem{definition}[theorem]{Definition}
\newcommand{\R}{\mathbb{R}}
\newcommand{\rank}{\operatorname{rank}}
\newcommand{\Z}{\mathbb{Z}}
\newcommand{\bb}{\mathrm{b}}
\newcommand{\dd}{\mathrm{d}}
\begin{document}

\begin{frontmatter}

\title{Principal Component Analysis of Persistent Homology Rank Functions with case studies of Spatial Point Patterns, Sphere Packing and Colloids}
\runtitle{PCA of Persistent homology rank functions}

\begin{aug}
  \author{\fnms{Vanessa}  \snm{Robins}\corref{}\thanksref{t2}\ead[label=e1]{vanessa.robins@anu.edu.au}},
  \author{\fnms{Katharine} \snm{Turner}\ead[label=e2]{kate@math.uchicago.edu}}

  \thankstext{t2}{Supported by an ARC Future Fellowship FT140100604 and Discovery Project DP110102888.}

  \runauthor{Vanessa Robins and Katharine Turner}

  \affiliation{}

  \address{The Australian National University; The University of Chicago.\\ 
          \printead{e1,e2}}
  
\end{aug}

\begin{abstract}
Persistent homology, while ostensibly measuring changes in topology, captures multiscale geometrical information.  It is a natural tool for the analysis of point patterns. In this paper we explore the statistical power of the (persistent homology) rank functions. For a point pattern $X$ we construct a filtration of spaces by taking the union of balls of radius $a$ centered on points in $X$, $X_a =  \cup_{x\in X}B(x,a)$.  The rank function $\beta_k(X):\{(a,b)\in \R^2: a\leq b\} \to \R$ is then defined by $\beta_k(X)(a,b) = \rank \left( \iota_*:H_k(X_a) \to H_k(X_b)\right)$ where $\iota_*$ is the induced map on homology from the inclusion map on spaces.  We consider the rank functions as lying in a Hilbert space and show that under reasonable conditions the rank functions from multiple simulations or experiments will lie in an affine subspace.  This enables us to perform functional principal component analysis which we apply to experimental data from colloids at different effective temperatures and of sphere packings with different volume fractions.  We also investigate the potential of rank functions in providing a test of complete spatial randomness of 2D point patterns using the distances to an empirically computed mean rank function of binomial point patterns in the unit square.
\end{abstract}

%
\begin{keyword}
\kwd{topological data analysis}
\kwd{persistent homology}
\kwd{Betti numbers}
\kwd{spatial point patterns}
\kwd{sphere packing}
\kwd{functional principal component analysis}
\end{keyword}

\end{frontmatter}

\section*{Introduction}

Random point patterns arise in a wide variety of application areas from astrophysics to materials science to ecology to protein interactions. 
The points might represent galaxies, colloidal particles, locations of trees, or molecules, and their distribution in space is  indicative of the underlying processes that created the pattern.  
When studying such systems a number of questions arise. 
For example, could a given pattern be generated from a purely random process with no underlying interactions between the objects (points)?  
If this null hypothesis can be rejected then we would like to say whether a proposed theoretical model generates patterns that are consistent with experimental observations.  
We might also need to compare a large number of point patterns and classify them into different groups, or quantitatively track changes over time to understand the dynamics of a system.  

Stochastic geometry provides various tools to characterise random spatial patterns and these have mostly focussed on first and second-order techniques analogous to means and variance in single-variable statistics~\cite{stoyan_stochastic_1995}. 
There are a number of situations where more sensitive tests of structural difference are required~\cite{mecke_morphological_2005}.
Persistent homology is an algebraic topological tool developed for data analysis that is an intuitively appealing measure of higher-order structure and encompasses spatial correlations of all orders~\cite{carlsson_topology_2009,edelsbrunner_computational_2010}.  
This paper shows how to adapt persistent homology information to perform statistical analyses of spatial point patterns, such as principal component analysis (PCA). 

Topology is the study of spatial objects equivalent under continuous deformations --- the old joke is that a topologist can't tell the difference between a coffee mug and a bagel.  The homology groups of a space $X$, $H_k(X)$, $k=0,1,2,\ldots$ are algebraically quantified topological invariants that provide information about equivalent points, loops, and higher dimensional analogs of loops.
Homology detects a $k$-dimensional hole as a $k$-dimensional loop (cycle) that does not bound a $(k+1)$-dimensional piece of the object.     
The ranks of the homology groups are called \emph{Betti numbers}:  $\beta_0$ counts the number of connected components, $\beta_1$ the number of independent non-bounding loops, $\beta_2$ the number of enclosed spaces in a three-dimensional object.   
A solid bagel and a coffee mug both have $\beta_0 = 1$, $\beta_1 = 1$, $\beta_2 = 0$, while their surfaces have $\beta_0 = 1$, $\beta_1 = 2$, $\beta_2 = 1$.  
Homology groups and their Betti numbers are inherently global properties of an object that are sensitive to some geometric perturbations (tearing and gluing) and not to others (continuous deformation).  

Another topological invariant that has recently been used to summarise structure in point patterns is the Euler characteristic signature function~\cite{mecke_morphological_2005,parker_automatic_2013}.
The Euler characteristic is the alternating sum of the Betti numbers (for three-dimensional objects, $\chi = \beta_0 - \beta_1 + \beta_2$).  
It is a topological invariant but also has measure-theoretic properties that make it more amenable to statistical analysis than the Betti numbers, but it is a less sensitive topological invariant by definition. 
Nevertheless, the above studies have demonstrated that the Euler characteristic signature function is sensitive to short-range, higher-order correlations in point patterns.

To determine the homology groups, the topological space must be represented by simple building blocks of points, line segments, surface patches, and so on, that are joined together in a specific way, i.e., as a cell complex. 
The homology groups are then defined via a boundary operator, $\partial_k$, that maps each cell of dimension $k$ onto the cells of dimension $k-1$ in its boundary:
\begin{equation}
	\partial_k :  C_{k} \to C_{k-1} 
\end{equation}
The kernel of $\partial_k$ is called the cycle group $Z_k$ and the image of $\partial_{k+1}$ is called the boundary group $B_k$. 
All boundaries are cycles, so we can form the homology group as the quotient $H_k = Z_k / B_k$.   
See the text by Hatcher for a comprehensive treatment of homology theory~\cite{hatcher_algebraic_2002}, or~\cite{robins_algebraic_2014} for a concise overview aimed at physicists. 

When examining point data, a parameter must be introduced to define which points are connected to one another and so build the cell complex. 
A crucial lesson learnt in the early days of topological data analysis is that instead of trying to find a single best value for this parameter, much more is learned by looking at how the homology evolves over a sequence of parameter values. 
Rather than working with single cell complex we work with a \emph{filtration}, a family of spaces $K_a$ such that $K_a\subset K_b$ whenever  $a\leq b$. 
Often this parameter $a$ is a length scale, so that although we are measuring topological quantities, the way these change tells us about the geometrical features of the data set.  
For example, if we used a filtration of $\mathbb{R}^3$ defined by lower level sets of the distance function to the surface of an ideally-smooth bagel, we would be able to read the radius of the hole of the bagel from the function $\beta_1(K_a)$ as it is at that radius that the space of loops changes from one to zero dimensional. 
If the bagel is a real one with irregular cross-section, bumps and splits, the Betti number function $\beta_1(K_a)$ will not be a  clean step function as it is in the ideal case, but may jump around and obscure the exact point of the large-scale change from bagel to blob. 

The issue of topological noise, i.e., the lack of stability of the Betti numbers, motivated the development of persistent homology in the 1990s~\cite{verri_use_1993,robins_towards_1999,edelsbrunner_topological_2002}.    
The inclusion of $K_a \subset K_b$ for $a < b$ induces a homomorphism between the homology groups $H_k(K_a)$ and $H_k(K_b)$ that tells us which topological features persist from $K_a$ to $K_b$ and which disappear (i.e., get filled in).  
The persistent homology group is the image of $H_k(K_a)$ in $H_k(K_b)$, it encodes the $k$-cycles in $K_a$ that are independent with respect to boundaries in $K_b$: 
\begin{equation}
    H_k(a,b) := Z_k(K_a)/ (B_k(K_b) \cap Z_k(K_a)).
\end{equation}
Algorithms for computing persistent homology from a given filtration are quite simple in their most basic form~\cite{edelsbrunner_topological_2002,zomorodian_computational_2009}, and are now implemented efficiently in a number of freely-downloadable packages~\cite{morozov_dionysus_2013,tausz_javaplex_2014,bauer_phat_2014,nanda_perseus_2015}. 

The two most common ways of representing persistent homology information are the barcode~\cite{carlsson_persistence_2005} and the persistence diagram~\cite{cohen-steiner_stability_2007}.  
The barcode is a collection of intervals $\left[ b,d \right)$ each representing the birth, $b$, and death, $d$, values of a persistent homology class. 
Equivalently, the persistence diagram is a set of points $(b,d)$ in the plane.  
The main problem with these objects is that they are very difficult to work with statistically.  
Distances between persistence diagrams and definitions of their means  and variance require advanced analytical techniques~\cite{bubenik_statistical_2010,turner_frechet_2012,munch_probabilistic_2015}.   

In this paper we return to the above definition of persistent homology groups and quantify them via their rank, 
\begin{equation}
	\beta_k (a,b) := \text{rank} H_k(a,b), \text{ for } a< b . 
\end{equation}
This \emph{persistent homology rank function} is an integer-valued function of two real variables and can be thought of as a cumulative distribution function of the persistence diagram. 
Since the persistent homology rank function is just a function we can apply standard statistical techniques to analyse distributions of them.  
This rank function is related to the size function~\cite{verri_use_1993} and has also been defined for multidimensional persistence, in the case of filtrations that are built using two or more parameters~\cite{carlsson_theory_2009,cerri_betti_2013}. 
Other functional summaries of persistence diagrams have also been proposed  recently and termed \emph{persistence landscapes and silhouettes}, see~\cite{bubenik_statistical_2015,chazal_stochastic_2014}.  
The persistence landscapes $\lambda_k(i,t)$ are a family of functions so that for each $i = 1,2,3,\ldots$, $\lambda_k(i,t)$ is a function of a single variable (the subscript $k$ is the homology dimension as above).   
The variable $t$ is related to the persistence diagram coordinates by $t = (b+d)/2$.  
We argue that the rank functions used here, $\beta_k(a,b)$, retain a more direct connection to the geometry and topology of the original data, although they are functions of two variables. In particular they are more suitable for analyzing distributions of local point configurations.
 
This functional approach to persistent homology (either using landscapes or rank functions) greatly simplifies the business of ``doing statistics'' with persistent homology.  
It provides a framework where it is simple to compute averages and variances of these topological signatures from many data sets generated by a particular system, and to make statistically rigorous statements about whether an experimentally-observed pattern is compatible with a theoretical model.
In particular the pointwise average of $\beta_k(a,b)$ is a function on $\R^{2+}$ which tells us the expected ranks of the corresponding persistent homology groups.  
We can also perform functional principal component analysis (PCA) where the principal component functions are also functions on $\R^{2+}$ containing topological information.  

In section \ref{sec:background} we provide the necessary background to define the persistent homology rank function (from now on shortened to rank function).  We then introduce a Hilbert space of functions in which the rank functions exist in section~\ref{sec:metric}. 
In section \ref{sec:PCA} we review the definition of functional PCA and explain how to compute the principal component functions and scores using the dot product matrix derived from a set of functions.

The applications we consider in this paper all pertain to point processes in some way. In section \ref{sec:CSR}, we propose a test for whether point processes are completely spatially random using the distribution of distances to the mean persistent homology rank function and compare its power to other standard tests for a variety of models of point processes. The final two sections apply functional PCA to real world data sets.  We first analyze the distributions of colloidal particles at a fluid interface from an experiment designed to test theories of melting of two-dimensional crystals.  The first principal component explains much of the variation ($>96\%$ for dimension 0 and $>86\%$ in dimension 1).  Furthermore, it separates the colloids into the separate phase  groups.
We then study random sphere packings using the locations of centers of the spheres derived from x-ray images and show that in both dimension 1 and dimension 2 persistent homology rank functions, the first principal component explains $>97\%$ of the variation and is highly related to volume fraction of the packings. 
Persistent homology gives topological information about the distributions of local arrangements for more and less efficient packings.

\section{Persistent homology rank functions}\label{sec:background}

To define persistent homology rank functions we will need to first provide some background theory on homology and persistent homology groups. 
The simplest homology theory to define is simplicial homology, whose ingredients are a topological space defined by a simplicial complex and a coefficient group, which for persistent homology computations is usually the field $\Z_2$.  


A \emph{$k$-simplex} is the convex hull of $k+1$ affinely independent points $v_0,v_1, \ldots v_k$ and is denoted $[v_0,v_1,\ldots,v_k]$. For example, the $0$-simplex $[v_0]$ is the vertex $v_0$, the $1$-simplex $[v_0,v_1]$ is the edge between the vertices $v_0$ and $v_1$ and the $2$ simplex $[v_0, v_1, v_2]$ is the triangle bordered by the edges $[v_0,v_1]$, $[v_1, v_2]$ and $[v_0, v_2]$. Technically, there is an orientation on simplices. If $\tau$ is a permutation then $[v_0,v_1,\ldots,v_k] = (-1)^{\operatorname{sgn}(\tau)}[v_{\tau(0)}, v_{\tau(1)}, \ldots , v_{\tau(k)}]$. However, if we are considering homology over $\Z_2$ then $1=-1$ and we can ignore orientation.

We call $[u_0, u_1, \ldots u_j]$ a \emph{face} of $[v_0,v_1, \ldots v_k]$ if $\{u_0, u_1, \ldots u_j\}\subset \{v_0,v_1, \ldots v_k\}$. A \emph{simplicial complex} $K$ is a countable set of simplices such that
\begin{itemize}
\item Every face of a simplex in $K$ is also in $K$.
\item If two simplices $\sigma_1,\sigma_2$ are in $K$ then their intersection is either empty or a face of both $\sigma_1$ and $\sigma_2$.
\end{itemize}

Given a finite simplicial complex $K$, a \emph{simplicial $k$-chain} is a formal linear combination (with coefficients in our field of choice) of $k$-simplices in $K$. The set of $k$-chains forms a vector space $C_k(K)$. We define the boundary map $\partial_k:C_k(K) \to C_{k-1}(K)$ by setting
\begin{displaymath}
\partial_k([v_0, v_1, \ldots v_k]) = \sum_{j=0}^k (-1)^j[v_0,\ldots \hat{v_j}, \ldots v_k]= \sum_{j=0}^k [v_0,\ldots \hat{v_j}, \ldots v_k]
\end{displaymath}
for each $k$-simplex and extending to $k$-chains using linearly. The second equality follows because our coefficient group is $\Z_2$ where $-1=1$.

Elements of $B_k(K) = \operatorname{im} \partial_{k+1}$ are called boundaries and elements of $Z_k(K)=\operatorname{ker} \partial_k$ are called cycles. Direct computation shows $\partial_{k+1}\circ\partial_k=0$ and hence $B_k(K) \subseteq Z_k(K)$. This means we can define the $k^{th}$ \emph{homology group} of $K$ to be
\begin{displaymath}
H_k(K):=Z_k(K)/B_k(K).
\end{displaymath}
A comprehensive introduction to homology can be found in \cite{hatcher_algebraic_2002}.

We now consider the definition of persistent homology groups of a filtration.  
A filtration $K = \{K_r | r\in \R\}$ is a countable simplicial complex indexed over the real numbers such that each $K_a$ is a simplicial complex and $K_a \subseteq K_b$ for $a \leq b$. 
We wish to describe how the topology of the filtration changes as the parameter increases. 
For $a\leq b$ we have an inclusion map of simplicial complexes $\iota: K_a \to K_b$ that induces inclusion maps 
\begin{displaymath}
	\iota:B_k(K_a) \to B_k(K_b) \quad \text{and}\quad \iota: Z_k(K_a) \to Z_k(K_b).
\end{displaymath}	
These inclusions induce homomorphisms (which are generally not inclusions) on the homology groups: 
\begin{displaymath}
	\iota_k^{a\to b}:H_k(K_a) \to H_k(K_b).
\end{displaymath} 
The image of $\iota_k^{a\to b}$ consists of equivalence classes of cycles that were present in $K_a$, where the homological equivalence is measured with respect to boundaries in $K_b$.  
We therefore define the \emph{persistent homology group} for $a\leq b$ as
\begin{displaymath}
H_k(a,b)= \iota( Z_k(K_a) ) / (B_k(K_b) \cap \iota( Z_k(K_a)) ).
\end{displaymath}
It is worth observing that $H_k(a,a) = H_k(K_a)$ and hence $\beta_k(a,a)=\beta_k(K_a)$ which motivates the use of the symbol $\beta$ for the rank function as it is a generalization of the traditional Betti numbers.

Let $\R^{2+}:=\{(x,y) \in (-\infty \cup \R) \times (\R\cup \infty): x<y\}$. 
We define the $k$-th dimensional persistent homology rank function corresponding to the filtration $K$ to be
\begin{align*}
\beta_k(K): \R^{2+} & \to \Z\\
(a,b) & \mapsto \dim H_k(a,b)
\end{align*}
There are two alternative arguments for the name ``rank''. Firstly, $\beta_k(K)(a,b)$ is the rank of $H_k(a,b)$ viewed as a module over $\Z_2$. Alternatively, since $H_k(a,b)$ is isomorphic to $\iota_k^{a\to b}(H_k(K_a))$ we know that $\beta_k(K)(a,b)$ is the rank of the function $\iota_k^{a \to b}:H_k(K_a) \to H_k(K_b)$.

We call a filtration \emph{tame} if $ \dim H_k(a,b)$ is finite for all $a < b$. From now on we will restrict our attention to tame filtrations (and hence everywhere finite valued rank functions). This restriction will be satisfied by any application involving finite  data.

The rank function contains the same information as the persistence diagrams and barcodes used frequently in computational topology literature. 
The persistence diagram in dimension $k$, denoted PD$k$,  consists of points $(\bb(\alpha),  \dd(\alpha) ) \in \R^{2+}$ where $\bb(\alpha)$ is the \emph{birth} value of a homology class and $\dd(\alpha)$ is the \emph{death} value of the filtration parameter. 
We say that a homology class $\alpha \in H_k(K_{\bb(\alpha)} )$ is \emph{born} at  $\bb(\alpha)$ if it is in the cokernel of $\iota_k^{s \to \bb(\alpha) }$ for any $s<\bb(\alpha)$.  
That is, $\alpha$, is not the image of any cycle that occurs earlier in the filtration. 
The homology class of $\alpha$ \emph{dies} at  $\dd(\alpha)$ if for $\bb(\alpha) < t <  \dd(\alpha)$ we have 
$\alpha \notin \ker  \iota_k^{\bb(\alpha) \to t}$ but $\alpha \in \ker  \iota_k^{\bb(\alpha) \to \dd(\alpha)}$. 
Informally we can think of the process of dying as the cycle becoming a boundary or two cycles merging.  
The difference $\dd(\alpha)- \bb(\alpha)$ is termed the \emph{persistence} of the cycle $\alpha$.  
Some cycles can have $\dd(\alpha) = \infty$, these are called \emph{essential classes}.  

Each rank function can be thought of as a cumulative function of a persistence diagram. This is because as the quantity $\beta_k(a,b)$ is the same as the number of points in the persistence diagram in the region $(-\infty, a] \times [b,\infty)$. The correspondence between persistence diagrams and rank functions makes it easy to compute the persistence diagrams from rank functions and vice versa. This is convenient as there are many available computer packages to compute persistence diagrams~\cite{morozov_dionysus_2013,tausz_javaplex_2014,bauer_phat_2014,nanda_perseus_2015}.

Rank functions have some nice monotonicity properties~\cite{cohen-steiner_stability_2007,carlsson_theory_2009}. 
If $\beta$ is a persistent homology rank function then for $a\leq c \leq b\leq d$
\begin{equation}\label{eq:mono} 
\beta(c,b)- \beta(a,b)-\beta(c,d)+\beta(a,d)\geq 0.
\end{equation}
This is because the inclusion exclusion principle tells us that $\beta(c,b)- \beta(a,b)-\beta(c,d)+\beta(a,d)$ should be the number of points in the corresponding persistence diagram lying in the region $(a,c]\times[b,d)$. This is depicted in Figure \ref{fig:mono}

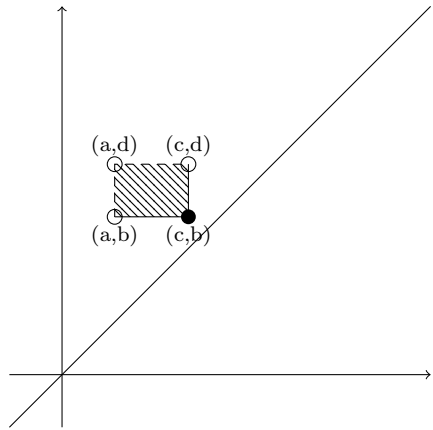
\begin{figure}[hbt]
\begin{center}
\begin{tikzpicture}[scale=.7]
\draw[->] (0,-1)--(0,7);
\draw[->](-1,0)--(7,0);
\draw[](-1,-1)--(7,7);

\draw (1,4) circle (4pt)  node [above] {(a,d)};
\draw (1,3) circle (4pt)  node [below] {(a,b)};
\fill (2.4,3) circle (4pt)  node [below] {(c,b)};
\draw(2.4,4) circle(4pt)  node [above] {(c,d)};
\fill[pattern=north west lines, pattern color=black] (1,3)--(1,4)--(2.4,4)--(2.4,3)--(1,3);
\draw (1,3)--(2.4,3);
\draw(2.4,3)--(2.4,4);
\draw[dashed](1,3)--(1,4)--(2.4,4);
\end{tikzpicture}
 
\caption{$\beta(c,b)- \beta(a,b)-\beta(c,d)+\beta(a,d)$ for the rank function $\beta$ is the number of points in the corresponding persistence diagram lying in the shaded region}
\label{fig:mono}
\end{center}
\end{figure}


 
A common way to produce filtrations is to consider sublevel or superlevel sets of a continuous function. Given $f:X \to \R$ we can set $K_t=f^{-1}(-\infty, t]$ for a filtration by sublevel sets. The filtration by superlevel sets of $f$ is effectively the same as the filtration by sublevel sets of $-f$. 
Combinatorial cell complexes derived from scalar functions with non-degenerate critical points are called \emph{Morse complexes}.  
An important example that we will focus on in this paper is the distance function to a set of points. 
The filtration induced by the distance to a set of points is modelled by a simplicial complex called the \v Cech complex.  
Let $S$ be the set of points $\{v_1, \ldots v_N \} \subset  \R^d$. The \v Cech filtration, $\{\mathcal{C}(S,r): r\geq 0\}$,  is the filtration over the complete $N$-simplex where
\begin{displaymath}
[v_{i_1},\ldots ,v_{i_k}] \in \mathcal{C}(S)_r \text{ whenever }\cap_{j=1}^k B(v_{i_j}, r) \neq \emptyset.
 \end{displaymath}
The Nerve Lemma \cite{edelsbrunner_union_1995} states that $\mathcal{C}(S, r)$ is homotopic to $\cup_{v \in S} B(v, r)$. 
This means the \v Cech filtration has the same persistent homology as the filtration by sub level sets of the distance function.
Computationally, the \v Cech filtration can be efficiently implemented for points in $\R^2$ and $\R^3$ by alpha shape subcomplexes of the Delaunay triangulation~\cite{edelsbrunner_three-dimensional_1994}.
 
Another common alternative filtration for point data used in Topological Data Analysis is the Rips filtration. 
The Rips filtration, $\{\mathcal{R}(S,r): r\geq 0\}$,  is the filtration of the complete $N$-simplex where
 \begin{displaymath}
 [v_{i_1},\ldots ,v_{i_k}] \in \mathcal{C}(S)_r \text{ whenever } d(v_{i_j},v_{i_l})\leq r \text{ for all } j,l.
 \end{displaymath}
It is the flag complex whose $1$-skeleta agree with that of the \v Cech complex. The Rips complex is often used in high-dimensional computations as it only requires knowledge of pairwise distances. 
Unfortunately it loses subtle local geometric structure; for example, it cannot distinguish triangles and tetrahedra whose vertices are nearest-neighbours. 

\section{Rank functions are a subset of a Hilbert space of functions} \label{sec:metric}

Persistent homology rank functions lie in the space of real valued functions from the Riemannian metric space $(\R^{2+},g)$ to $\R$. 
We consider a metric $d(\cdot,\cdot)$ on this space of functions defined by 
\begin{align}\label{metric}
d(f,h)^2  = \int_{\R^{2+}}(f-h)^2\, d\mu
\end{align}
where $\mu$ is the measure on $\R^{2+}$ corresponding to the metric $g$ on $R^{2+}$. 
There are many choices of the metric $g$ and hence also its corresponding measure $\mu$. 
A potential problem is that depending on the choice of $\mu$, the pairwise distances between rank functions may be infinite. 
For example, if we choose $\mu$ to be the standard metric for the plane restricted to $\R^{2+}$, (i.e., Lesbesgue measure $\lambda$) the distance between two rank functions can only be finite when their respective sets of essential cycles have identical birth times. 
Otherwise, there is an infinite region in the plane where the rank functions differ and the Lesbesgue measure of this region is infinite. 
To address this issue, we consider different measures on $\R^{2+}$ obtained by multiplying Lesbesgue measure by a weighting function,  $\mu((x,y)) = \phi(y-x)\lambda((x,y))$. 
Making $\phi$ a function of $(y-x)$ means the measure $\mu$ is a function of the persistence or lifetime of each homology class.  
Given a weighting function $\phi$ the distance function is then 
\begin{align}\label{metric}
d_\phi(f,h)^2  = \int_{x<y}(f-h)^2 \phi(y-x)\, dx\, dy.
\end{align}
The choice of weighting function is analogous to the choices made in multivariate PCA where rescaling the original coordinates (by changing the units of measure) will affect the PCA components.

The following Lemma establishes sufficient conditions to ensure rank functions are a finite distance apart.  
Then given a set of rank functions, each pairwise a finite distance apart, we can consider the affine space that they lie in and so construct means and principal components.   

\begin{lemma}\label{lem:finitedist}
Let $\phi:[0,\infty) \to [0,\infty)$ such that $\int_0^\infty \phi(t) \,dt <\infty$.
Let $f_K$ and $f_L$ be rank functions constructed from filtrations $K_t$ and $L_t$. 
If $K_\infty$ and $L_\infty$ are finite simplicial complexes with $H_*(K_{-\infty})=H_*(L_{-\infty})$ and $H_*(K_{\infty})=H_*(L_{\infty})$ then $d_\phi(f_K, f_L)<\infty$. 
\end{lemma}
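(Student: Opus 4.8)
The plan is to reduce the claim to an elementary integral estimate once one understands \emph{where} the two rank functions can differ. Write $g=f_K-f_L$, and recall from the correspondence with persistence diagrams that $f_K(x,y)$ equals the number of points of PD$k$ of $K$ lying in $(-\infty,x]\times[y,\infty)$, and similarly for $L$. Since $K_\infty$ and $L_\infty$ are finite simplicial complexes, both diagrams are finite multisets of points, so each rank function is bounded by the number of its diagram points; hence $g$ is bounded, say $|g|\le M$, and $g^2\le M^2$ pointwise. The only way $d_\phi(f_K,f_L)$ can be infinite is therefore through the size of the region on which $g\neq 0$, weighted by $\phi$.

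First I would change variables from $(x,y)$ to $(x,s)$ with $s=y-x>0$, under which $\dd x\,\dd y=\dd x\,\dd s$, so that
\begin{equation*}
d_\phi(f_K,f_L)^2=\int_0^\infty \phi(s)\left(\int_{-\infty}^{\infty} g(x,x+s)^2\,\dd x\right)\dd s .
\end{equation*}
Because $\int_0^\infty\phi(s)\,\dd s<\infty$, it suffices to produce a constant $W$, \emph{independent of} $s$, with $\int_{-\infty}^{\infty} g(x,x+s)^2\,\dd x\le M^2 W$ for every $s>0$; the distance is then at most $M^2 W\int_0^\infty\phi<\infty$. Equivalently, the entire problem is to show that the slice $x\mapsto g(x,x+s)$ is supported, uniformly in the persistence $s$, on a set of $x$-measure at most $W$.

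The two hypotheses are exactly what control the two unbounded ends of the $x$-line. Let $B$ exceed every finite birth and death value occurring in the two diagrams. For $x>B$ (so automatically $y=x+s>B$) every diagram point has birth $\le x$, and only the essential classes (death $=\infty$) have death $\ge y$; hence $f_K(x,y)$ stabilises to the number of essential $k$-classes, which is $\dim H_k(K_\infty)$, and likewise $f_L(x,y)=\dim H_k(L_\infty)$. The hypothesis $H_*(K_\infty)=H_*(L_\infty)$ then forces $g\equiv 0$ for $x>B$, giving a clean upper cutoff that holds regardless of $s$. In the situation relevant to our applications the filtration starts from the empty complex, so $H_*(K_{-\infty})=H_*(L_{-\infty})=0$, there are no classes born at $-\infty$, and $g\equiv 0$ also for $x$ below the smallest (finite) birth value $A$. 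Thus $g$ vanishes outside the vertical strip $A\le x\le B$, and one may take $W=B-A$, completing the argument in this case.

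The step I expect to be the genuine obstacle is the lower end when $K_{-\infty}$ is nonempty, for this is where the $H_*(K_{-\infty})=H_*(L_{-\infty})$ hypothesis must do real work. The infinite-measure contributions to $d_\phi$ come precisely from the two kinds of ``infinite'' diagram points: essential classes (death $=\infty$) and classes born at $-\infty$, each of which contributes the indicator of an unbounded region whose $\phi$-weighted integral diverges on its own. Finiteness of $d_\phi$ therefore rests entirely on these contributions cancelling between $f_K$ and $f_L$; matching $\dim H_k$ at $+\infty$ delivers the cancellation at the right-hand end, and matching $\dim H_k$ at $-\infty$ is the natural device for the left-hand end. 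The delicate point requiring care is that, for $x$ below $A$, the difference $g(x,y)$ becomes a function of $y$ alone governed by the death distribution of the $-\infty$-born classes, so one must verify that $\dim H_k(K_{-\infty})=\dim H_k(L_{-\infty})$ (together with the $+\infty$ matching) is enough to keep the effective support width bounded \emph{independently of} $s$, rather than allowing it to grow with the persistence; pinning down this uniformity is where the estimate is most subtle.
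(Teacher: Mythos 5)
You prove the lemma completely only in the special case $H_*(K_{-\infty})=H_*(L_{-\infty})=0$: there your strip argument is correct --- $g=f_K-f_L$ is bounded, vanishes off $A\le x\le B$, every slice $\{y=x+s\}$ meets the support in measure at most $W=B-A$ uniformly in $s$, and $d_\phi(f_K,f_L)^2\le M^2W\int_0^\infty\phi(s)\,\dd s<\infty$. The paper's proof follows the same skeleton but claims the general case: it bounds both rank functions by a constant $N$, notes that all finite birth and death values lie in some interval $[m,M]$, shows $f_K=f_L$ on $\{y<m\}$ (the hypothesis $H_*(K_{-\infty})=H_*(L_{-\infty})$ forces equal numbers of points born at $-\infty$) and on $\{x>M\}$ (equal numbers of essential classes), and then bounds $d_\phi^2$ by $N^2\int_{\{x<y,\;y>m,\;x<M\}}\phi(y-x)\,\dd x\,\dd y$. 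So relative to the statement your proposal has a genuine gap --- the case of nonzero homology at $-\infty$ is left open --- although, as you note, the case you do handle covers all of the paper's applications, where the filtrations begin empty.

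However, the ``delicate point'' you flag is not a defect of your attempt alone: it is a real hole in the lemma and in the paper's proof as written. In your $(x,s)$ coordinates the paper's bounding region has slice width $(M-m)+s$, so its final bound equals $N^2\int_0^\infty\phi(s)\bigl((M-m)+s\bigr)\dd s$, which is finite only when $\phi$ has a finite first moment --- not implied by $\int_0^\infty\phi<\infty$. And the divergence can genuinely occur: on the quadrant $\{x<m,\;y>M\}$ the difference $g$ equals the difference in the numbers of $(-\infty,\infty)$ points, i.e.\ $\rank\bigl(H_k(K_{-\infty})\to H_k(K_\infty)\bigr)-\rank\bigl(H_k(L_{-\infty})\to H_k(L_\infty)\bigr)$, a quantity the hypotheses do not control, since they constrain the groups at the two ends but not the map between them. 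Concretely, let $K_t$ be a hollow triangle for $t<0$ and, for $t\ge0$, that triangle filled in together with a new disjoint hollow triangle; let $L_t$ be a hollow triangle for $t<0$ and that triangle together with a disjoint vertex for $t\ge0$. Then $H_*(K_{\pm\infty})\cong H_*(L_{\pm\infty})$ in all dimensions, but the dimension-$1$ rank functions satisfy $|f_K-f_L|=1$ on all of $\{x<0<y\}$, so $d_\phi(f_K,f_L)^2=\int_0^\infty s\,\phi(s)\,\dd s$, which diverges for, e.g., $\phi(s)=(1+s)^{-3/2}$. Hence the general case cannot be completed as stated: one must additionally assume $\int_0^\infty t\,\phi(t)\,\dd t<\infty$, or that the two filtrations have the same number of classes born at $-\infty$ that never die (automatic in your empty-start case). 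Your uniform-in-$s$ support criterion is exactly the right diagnostic here, and your writeup combined with this observation gives a more accurate account than the paper's own proof.
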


\begin{proof}
Let $X_K$ and $X_L$ be the persistence diagrams corresponding to $f_K$ and $f_L$.
Since $K_\infty$ and $L_\infty$ are finite simplicial complexes there exists finite $m$ and $M$ such that every point in the persistence diagrams is of the form $(-\infty, b), (-\infty,\infty), (a,b)$ or $(a,\infty)$ where $m\leq a,b\leq M$. There also exists an $N$ such that $0\leq f_K(x,y), f_L(x,y) \leq N$ for all $(x,y)\in \R^{2+}$.

The assumption that $H_*(K_{-\infty})=H_*(L_{-\infty})$ implies that $X_K$ and $X_L$ have a number of points whose first coordinate is $-\infty$. This implies that the $f_K(x,y)=f_L(x,y)$ whenever $y<m$. Similarly $f_K(x,y)=f_L(x,y)$  whenever $x>M$.

\begin{align*}
d(f_K,f_L)^2  &= \int_{x<y}(f_K-f_L)^2 \phi(y-x)\, dx\, dy   \\
& \leq \int_{\{(x,y): x<y, y>m, x<M\}}N^2 \phi(y-x)\, dx\, dy  \\
& < \infty
\end{align*}

\end{proof}

The sufficient conditions expressed in Lemma \ref{lem:finitedist} are by no means exhaustive. 
However, practically speaking, whenever the input is finite (or from a finite discretization) and the starting and ending stages of the filtrations are the same, the rank functions will lie in the same affine space and this will cover almost any real application.
In particular, rank functions computed on the \v{C}ech complexes of point patterns, the colloid and the sphere packing data all satisfy the conditions in Lemma \ref{lem:finitedist}.  

A scenario not covered by  Lemma \ref{lem:finitedist} is if we considered rank functions constructed from the \v Cech complexes on two point patterns but one of the point patterns lived in the unit square and the other lived in the flat torus. In this example the pairwise distance will be infinite.

Now fix a function $h:\R^{2+} \to \R$ and consider the space of functions, denoted $A(h)$, that are a finite distance from $h$. This is an affine space that can be transformed to a vector space, $V(h)$, by centering about $h$:  $V(h)=\{f-h : \: f \in A(h)\}$. 
We can put an inner product structure on $V(h)$ using 
\begin{displaymath}
\langle v_1, v_2 \rangle = \int_{\R^{2+}} v_1 \, v_2 \,d\mu.
\end{displaymath}

We will generally use the mean persistent rank function for centering.  
Let $\beta^1, \beta^2, \ldots \beta^n$ be $k$-dimensional rank functions.   
We can define a mean persistent homology rank function simply as 
\begin{displaymath}
 \bar{\beta}(a,b) = \frac{1}{n}\sum_{i=1}^n \beta^i(a,b)
 \end{displaymath}
for all $(a,b) \in \R^{2+}$. 
This definition retains topological meaning because $\bar{\beta}(a,b)$ is the expected dimension of $H_k(a,b)$.
Similarly, we can define a mean persistent homology rank function over a distribution of rank functions by taking the appropriate pointwise defined integrals.
The mean rank function is unlikely to be a persistent homology function as it will probably have some non-integer values. However it is easy to check that it will still retain the monotonicity properties of persistent homology functions \eqref{eq:mono}.

The following corollary holds in more general contexts but is sufficient for our needs.
\begin{cor}\label{cor:affine}
Let $X$ be a manifold with bounded curvature and finite dimensional homology (such as $X=\R^d$). Let $\{\beta^1, \beta^2, \beta^3, \ldots \beta^n\}$ either be the set of persistent homology rank functions from the \v Cech filtrations of different finite point clouds in $\R^d$ or from the filtrations of sub level sets of kernel density functions from different finite point clouds in $X$. Let $\bar{\beta}$ be the (pointwise) mean of the $\beta^i$. Then the $\{\beta^1, \beta^2, ... , \beta^n\} \subset A(\bar{\beta})$ and $\{\beta^1-\bar{\beta}, \beta^2-\bar{\beta}, ... \beta^n-\bar{\beta}\}$ lie in an at most $(n-1)$-dimensional vector space.  
\end{cor}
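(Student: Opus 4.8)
The plan is to establish the two assertions separately, with the finite-distance claim carrying essentially all of the content and the dimension bound following immediately from the centering. For the dimension bound, the $n$ centered functions satisfy the single linear relation
\begin{displaymath}
\sum_{i=1}^n \left(\beta^i - \bar{\beta}\right) = \left(\sum_{i=1}^n \beta^i\right) - n\bar{\beta} = 0,
\end{displaymath}
so they are linearly dependent and hence span a subspace of dimension at most $n-1$. This step presupposes only that the $\beta^i - \bar{\beta}$ genuinely lie in $V(\bar{\beta})$, i.e. that each $\beta^i$ is a finite distance from $\bar{\beta}$, so the whole corollary reduces to proving $\{\beta^i\} \subset A(\bar{\beta})$. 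Writing $\beta^i - \bar{\beta} = \frac{1}{n}\sum_{j}(\beta^i - \beta^j)$ and invoking Minkowski's inequality in $L^2(\R^{2+},\mu)$, it suffices to show every pairwise distance $d_\phi(\beta^i,\beta^j)$ is finite. Thus the goal becomes verifying the hypotheses of Lemma \ref{lem:finitedist} for every pair of filtrations in each of the two families.

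For the \v{C}ech family the verification is direct. Writing $K$ for the \v{C}ech filtration of one finite point cloud, for $r<0$ no balls are present so $K_{-\infty}=\emptyset$ and $H_*(K_{-\infty})=0$; for $r$ sufficiently large every finite subcollection of balls has common intersection, so $K_\infty$ is the full simplex on the vertex set, which is contractible, giving $H_0(K_\infty)=\Z_2$ and $H_k(K_\infty)=0$ for $k\geq 1$. Both groups are independent of the point cloud, in particular of the number of points, so any two such filtrations $K,L$ satisfy $H_*(K_{-\infty})=H_*(L_{-\infty})$ and $H_*(K_\infty)=H_*(L_\infty)$, with $K_\infty,L_\infty$ finite simplicial complexes. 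Lemma \ref{lem:finitedist} then yields $d_\phi(\beta^i,\beta^j)<\infty$, settling this case.

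The sublevel-set case is where the real difficulty lies, since $X$ (for instance $\R^d$) is typically not a finite simplicial complex and Lemma \ref{lem:finitedist} does not apply verbatim but must be reproved in this setting. The plan is to show that bounded curvature and finite-dimensional homology force each kernel-density filtration to have a finite persistence diagram with matching behaviour at $\pm\infty$. A kernel density built from finitely many points is a smooth nonnegative function whose gradient points toward the data outside a large ball, so its critical points lie in a bounded region; combined with bounded curvature (controlling the topology of the sublevel sets) this should produce only finitely many topological changes, hence a finite persistence diagram and an everywhere-finite rank function bounded by some $N$. At the extremes $K_{-\infty}=\emptyset$ while $K_\infty = X$, so $H_*(K_{-\infty})=0$ and $H_*(K_\infty)=H_*(X)$, both finite and common to all point clouds on $X$. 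One then reruns the estimate in the proof of Lemma \ref{lem:finitedist}: the matching essential classes (birth $-\infty$ or death $+\infty$) force $\beta^i$ and $\beta^j$ to agree outside a bounded window of $\R^{2+}$, on which the integrand is at most $N^2\phi(y-x)$, and $\int_0^\infty \phi < \infty$ makes the integral finite. The main obstacle is therefore not the algebra of centering but this analytic and Morse-theoretic input, namely justifying that bounded curvature together with finite-dimensional homology really does control the number and location of the critical points of the density uniformly, so that the finiteness argument of Lemma \ref{lem:finitedist} transfers faithfully to the non-compact manifold setting.
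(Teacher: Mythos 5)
Your argument coincides in substance with the paper's intended proof: the paper states this corollary with no written proof at all, presenting it as an immediate consequence of Lemma \ref{lem:finitedist}, and your handling of the \v Cech case (empty complex at $-\infty$, contractible full simplex at $+\infty$, both homologies independent of the particular cloud) together with the Minkowski reduction $d_\phi(\beta^i,\bar{\beta})\le \tfrac{1}{n}\sum_j d_\phi(\beta^i,\beta^j)$ and the linear relation $\sum_i (\beta^i-\bar{\beta})=0$ is exactly how that implication goes; if anything, you are more explicit than the paper about the centering and dimension-count step, which the authors leave entirely implicit.

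The divergence is in the kernel-density case. You are right that Lemma \ref{lem:finitedist} does not apply verbatim when the ambient space is a non-compact manifold, and your Morse-theoretic plan (critical points confined to a bounded region, tameness, matching homology at the extremes $\emptyset$ and $X$) is the honest route for the continuous statement. The paper simply sidesteps this: it says the corollary ``holds in more general contexts'' and, in the discussion after the lemma, that any finite input ``or finite discretization'' is covered --- in practice a kernel density is evaluated on a finite grid, so its sublevel filtration is a filtration of a finite complex with $K_{-\infty}=\emptyset$ and $K_\infty$ the whole grid, and the lemma then applies verbatim; that shortcut buys rigor at the cost of proving a discretized statement rather than the one literally asserted. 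So your sketch attempts more than the paper does, and you flag its incompleteness accurately. One detail in the sketch needs correcting, though: matching homology at the extremes does \emph{not} force $\beta^i$ and $\beta^j$ to agree outside a \emph{bounded} window when $X$ has nontrivial homology, because the essential classes of the two filtrations may have different finite birth values, so the ranks can differ on unbounded vertical strips $\{b_1\le x\le b_2,\ y>x\}$. The conclusion survives because such a strip has finite $\phi$-measure (width at most $M-m$ times $\int_0^\infty\phi(t)\,dt$), but your estimate should be phrased in terms of a region of finite measure rather than a bounded one.
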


\section{Application: Testing whether point patterns are completely spatially random}\label{sec:CSR}

One of the basic questions in the field of spatial point process analysis is testing the hypothesis that a point process comes from some particular model. 
The method we outline here can be used for any model (after fixing all parameters) but we have restricted ourselves to the case of testing whether point patterns are completely spatially random --- a common null hypothesis. 
A spatial point process is completely spatially random (CSR) if there is no interaction between the locations of the points. 
The two main CSR models are (homogeneous) Poisson point processes and (homogeneous) Binomial point processes. 
\begin{definition}
Let $f$ be a probability density function on a set $A$ and let $n \in \mathbb{N}$. A point process $X$ consisting of $n$ independent and identically distributed points with common density $f$ is called a \emph{binomial point process} of $n$ points in $A$ with density $f$. If $f$ is constant then the binomial point process is called homogeneous and is CSR. 
\end{definition}
\begin{definition}
A point process $X$ on $A$ is a \emph{Poisson point process} with intensity function $\rho$ (and intensity measure $\mu$) if the following two properties hold:
\begin{itemize}
\item For all $B \subset A$, the number of points in $X$ lying in $B \subset A$ follows a Poisson distribution with mean $\mu(B)$.
\item For $B, C \subset A$ with empty intersection, the number of points in $X$ lying in $B$ and $C$ are independent.
\end{itemize}
\end{definition}
These two CSR models are intimately related --- a Poisson distribution is a binomial point process $X$ of $P(\lambda)$ points, where $P(\lambda)$ is a drawn from a Poisson distribution.

Here we propose a test for point patterns being CSR using persistent homology rank functions. The persistent rank functions corresponding to the dimensions $0$ and $1$ each provide a different test. The first step is to estimate empirically the mean persistent rank function $\bar{\beta_k}$. Then using a new set of CSR patterns we estimate the distribution of distances squared to $\bar{\beta_k}$. We then determine a cutoff $C_k$ depending on a chosen p-value (i.e., confidence level). 
New point patterns are then deemed not CSR if their distance to $\bar{\beta_k}$ is more that $C_k$.

We used this test with point patterns drawn from a variety of point process models to demonstrate its power.   
The same point patterns were also tested using some standard CSR tests for the sake of comparison and the results are presented in Table \ref{tab:power}. 
The Completely Spatially Random model used for each test is a set of $100$ i.i.d. points patterns in the unit square. 
The estimated mean persistence rank functions $\bar{\beta_0}$, $\bar{\beta_1}$ were computed using $300$ CSR point patterns. 
The distributions of the distances squared to $\bar{\beta_k}$ were computed using an additional independent set of $200$ CSR point patterns.  These provided cutoffs of $C_0=0.05$ and $C_1=0.01$ corresponding to the $p$-value of $0.05$. 

\begin{figure}[h]
(a)\includegraphics[height=2in]{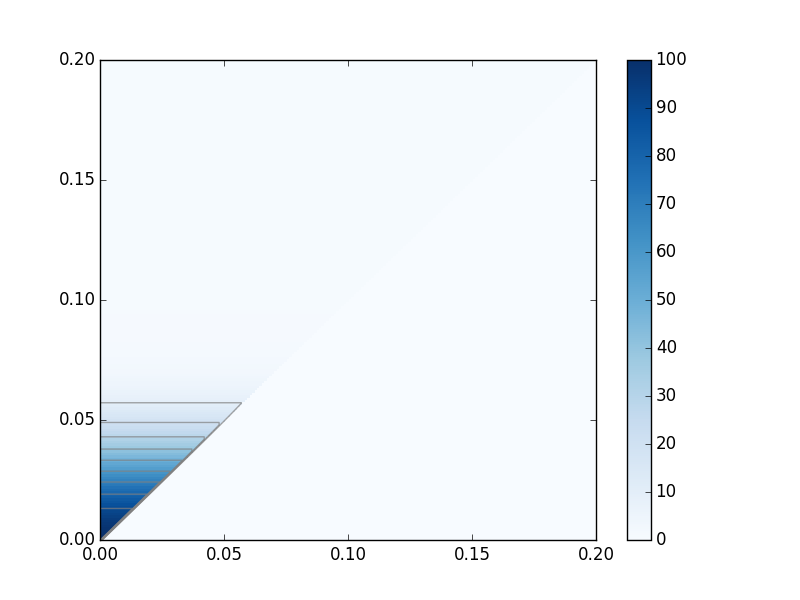}
(b)\includegraphics[height=2in]{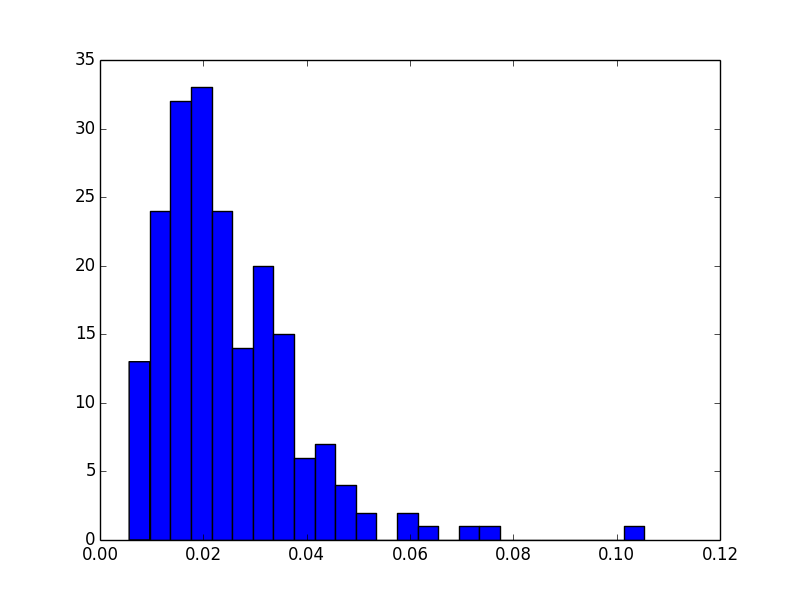}
\caption{(a)The mean $H_0$ persistent rank function of $300$ i.i.d. CSR point patterns of $100$ points in the unit square. (b) The distances squared of $200$ independent CSR patterns from the mean rank function depicted in (a).}  
\end{figure}
\begin{figure}[h]
(a)\includegraphics[height=2in]{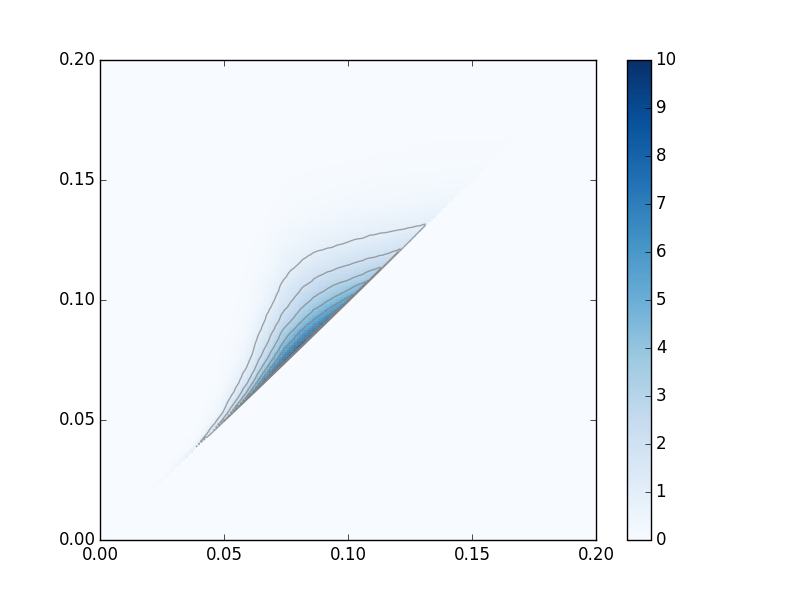}
(b)\includegraphics[height=2in]{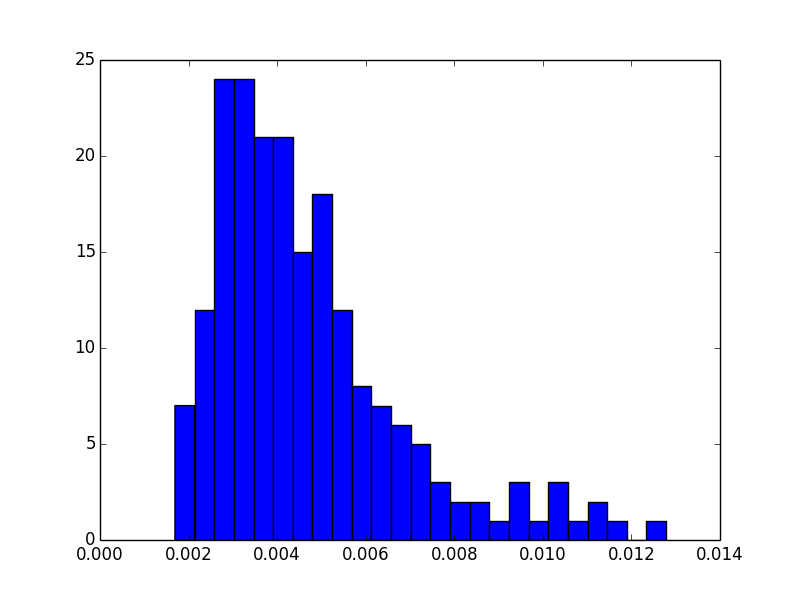}
\caption{(a)The mean $H_1$ persistent rank function of $300$ i.i.d. CSR point patterns of $100$ points in the unit square. (b) The distances squared of $200$ independent CSR patterns from the mean rank function depicted in (a).}  
\end{figure}

We tested $100$ point patterns each from a variety spatial point processes in the unit square, each conditioned on the number of points being exactly $100$ (to match the Binomial CSR model). 
The four models are another set of CSR patterns,  Baddeley-Silvermann, the Strauss pairwise-interaction model and a Matern clustering model \cite{baddeley_spatstat_2005}.
We then compare our results with those given by other standard tests on the same set of simulated point patterns. 
These other tests were the Diggle-Cressie-Loosmore-Ford  test (DCLF), Maximum Absolute Deviation test (mad), quadrant test (quad) and Clark and Evans test (Clark Evans) all available in the `spatstat' R package \cite{baddeley_spatstat_2005}.

\begin{figure}[h]
\begin{center}
\begin{tabular}{c|c|c|c|c}\label{tab:power}
 & CSR 	& 		Strauss & Matern Cluster & Baddeley-Silverman \\
 \hline
DCLF 	& 2		&1		&98			&0\\
Mad 		& 0		&0		&98			&0\\
Quad 	& 3		&10		&97			&1\\
Clark Evans & 5	& 56		&67			&96\\
dim 0 	& 3		&73		&14			&99\\
dim 1 	& 7		&17		&59			&81\\
\end{tabular}
\end{center}
\caption{The number of point patterns (out if $100$ testing) for which we rejected the null hypothesis that they were CSR. Here we compare the results from the persistent homology rank test we constructed with some standard tests available in the $R$-package spatstat \cite{baddeley_spatstat_2005}.} 
\end{figure}

We now will give  a brief overview of theses models and all the parameters we chose.
\begin{figure}[hbt]
(a)\includegraphics[height=2in]{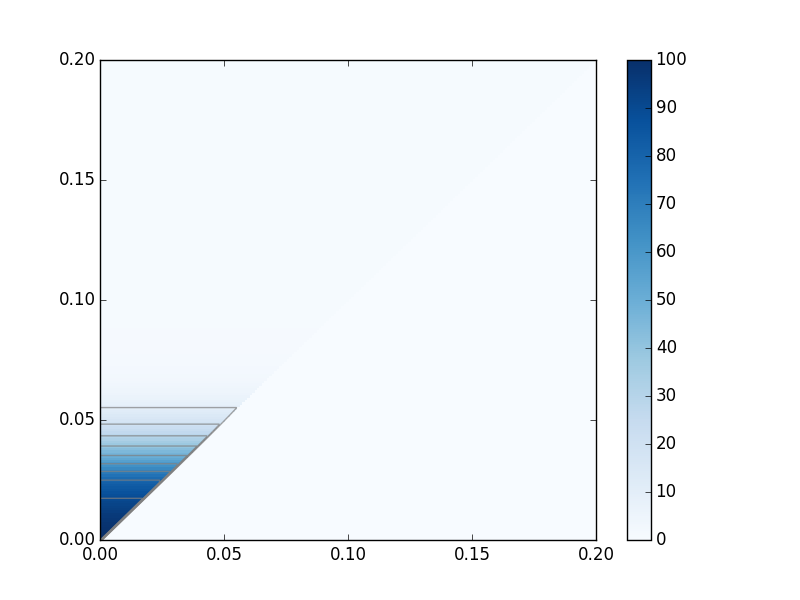}
(b)\includegraphics[height=2in]{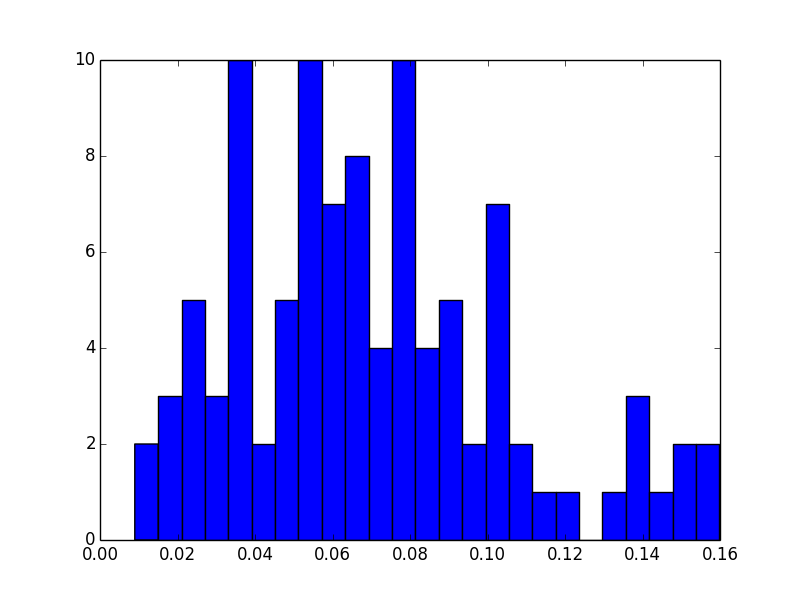}
(c)\includegraphics[height=2in]{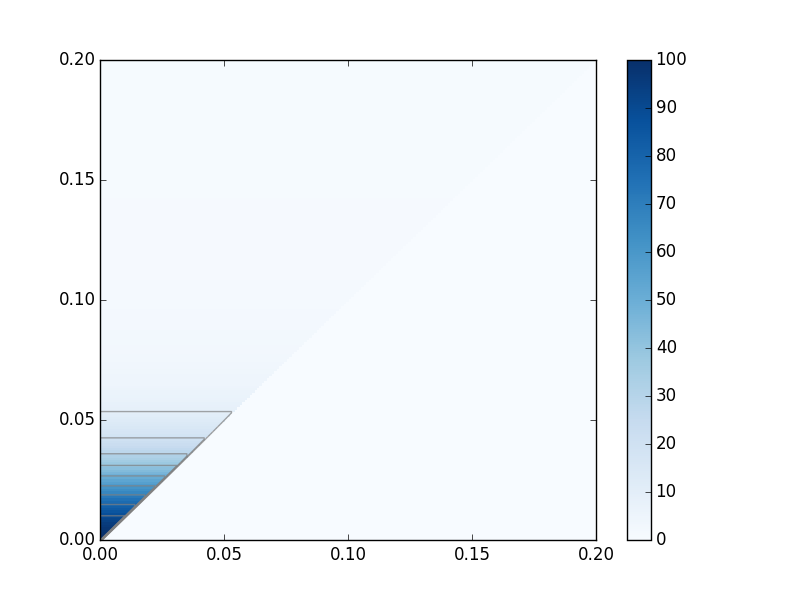}
(d)\includegraphics[height=2in]{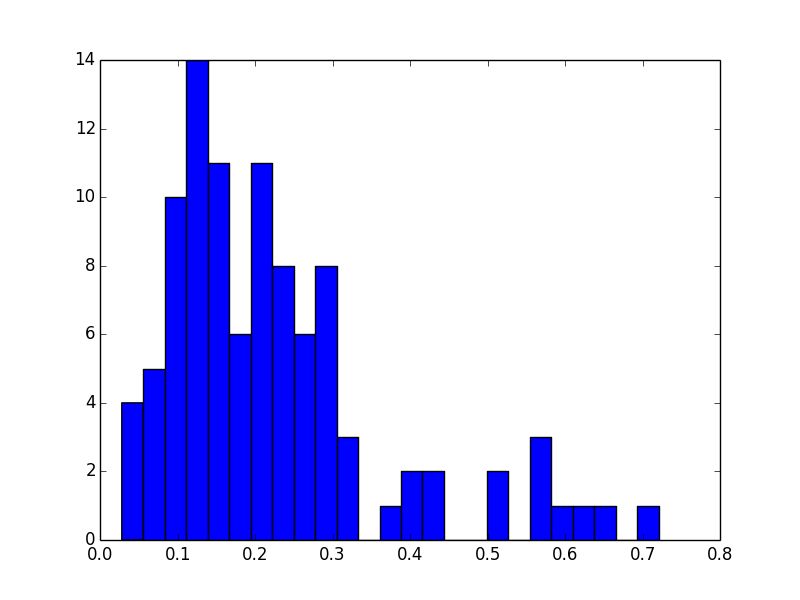}
(e)\includegraphics[height=2in]{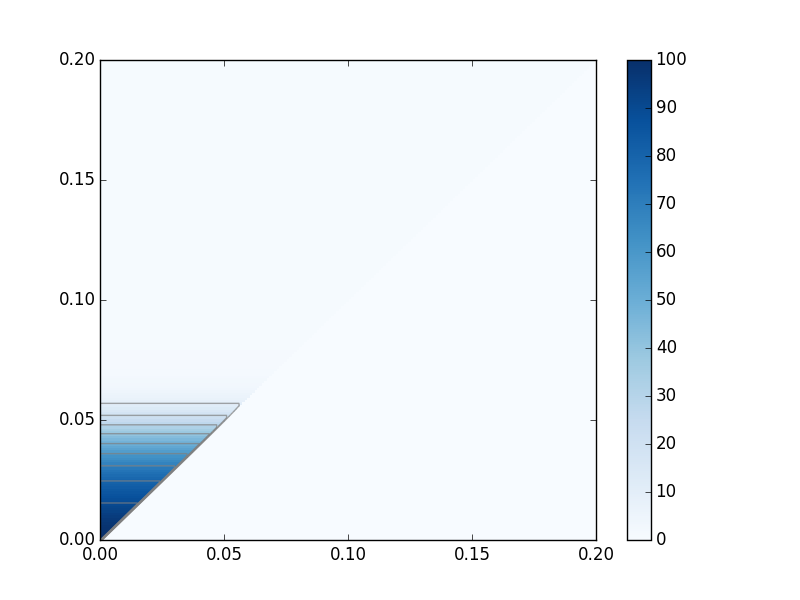}
(f)\includegraphics[height=2in]{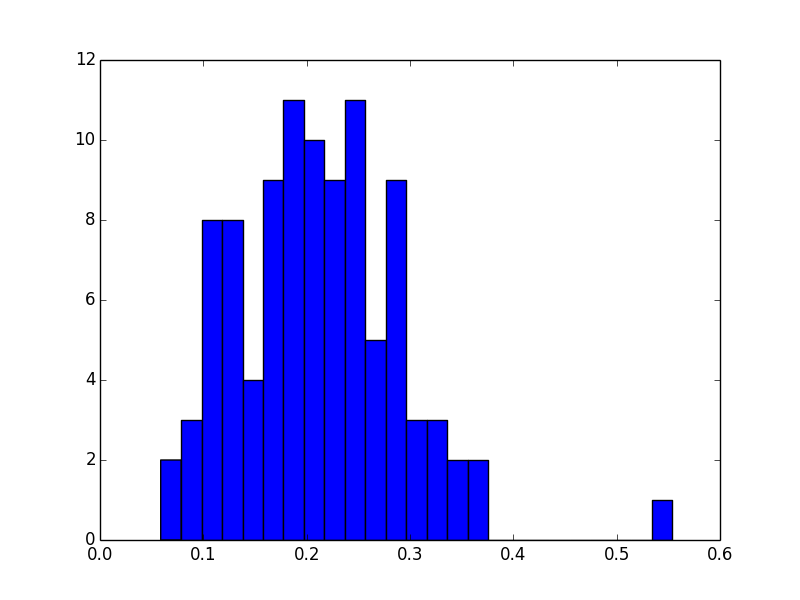}
\caption{(a), (c), (e) are the mean $H_0$ persistent rank function of $100$ i.i.d. spatial point patterns from the models of Strauss, Matern clustering and Baddeley-Silvermann, respectively.
(b), (d) and (f) are the corresponding distributions of distances squared to $\bar{\beta_0}$.}
\end{figure}

\begin{figure}[hbt]
(a)\includegraphics[height=2in]{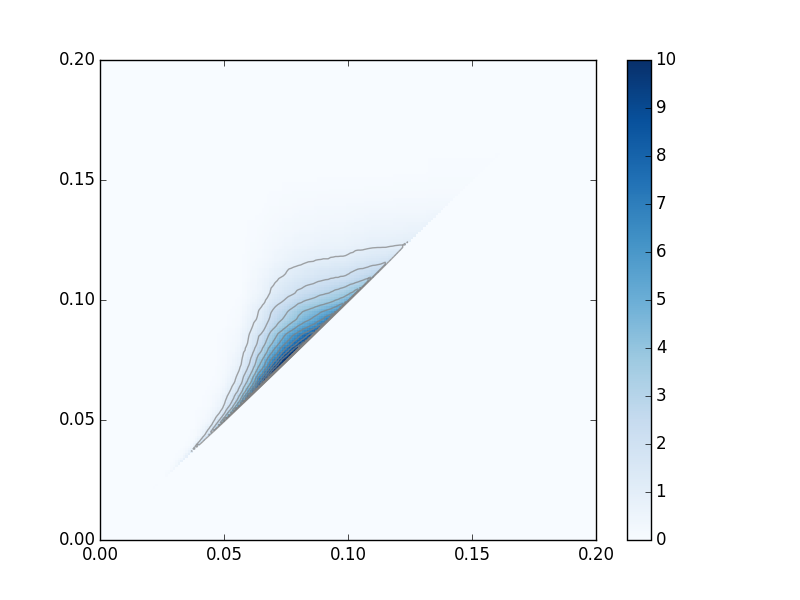}
(b)\includegraphics[height=2in]{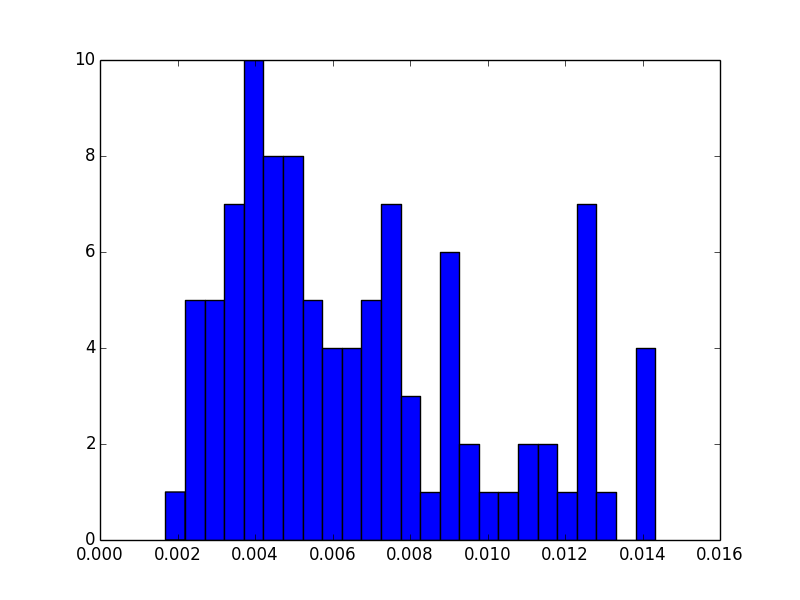}
(c)\includegraphics[height=2in]{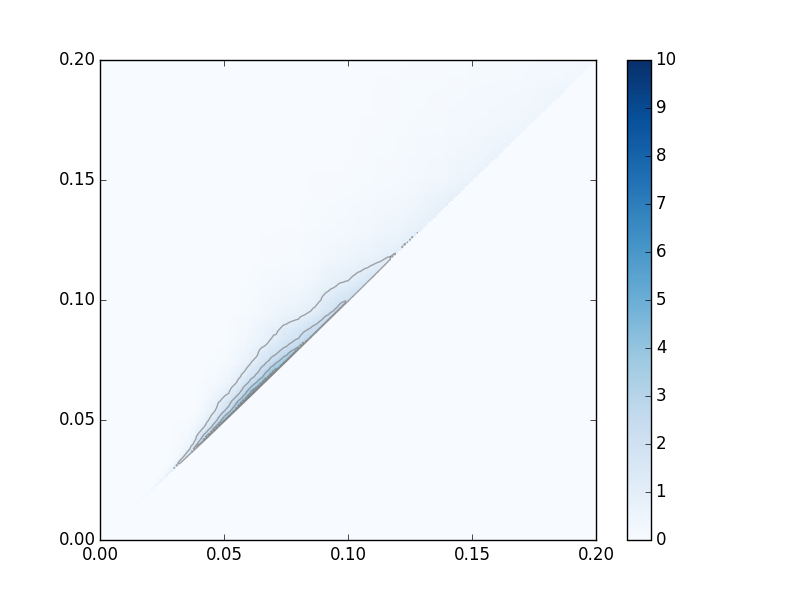}
(d)\includegraphics[height=2in]{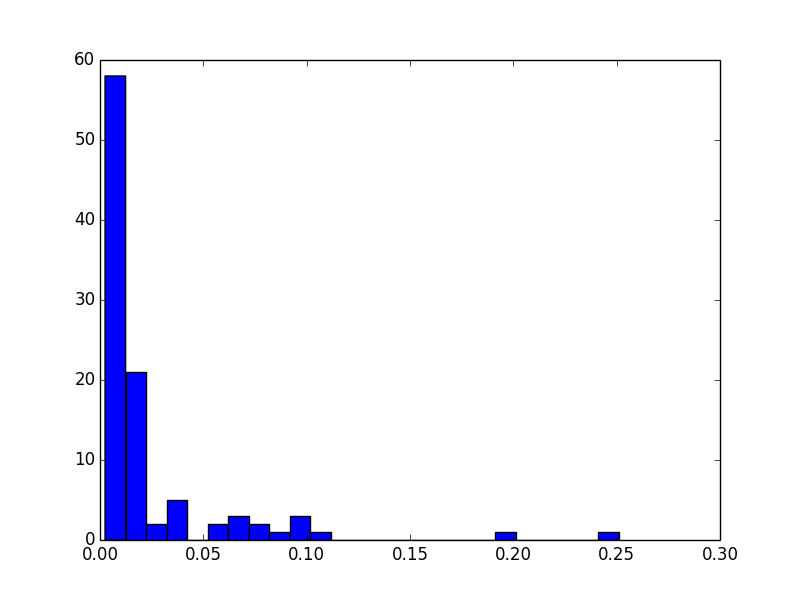}
(e)\includegraphics[height=2in]{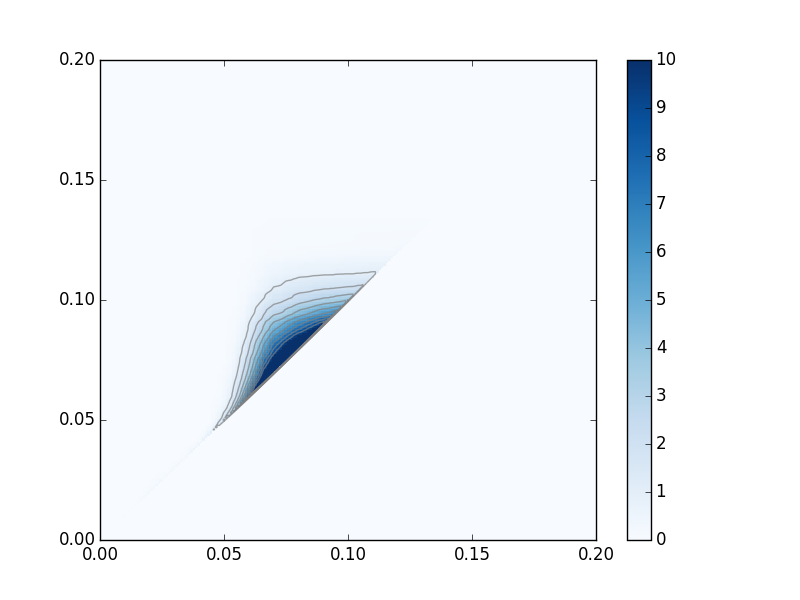}
(f)\includegraphics[height=2in]{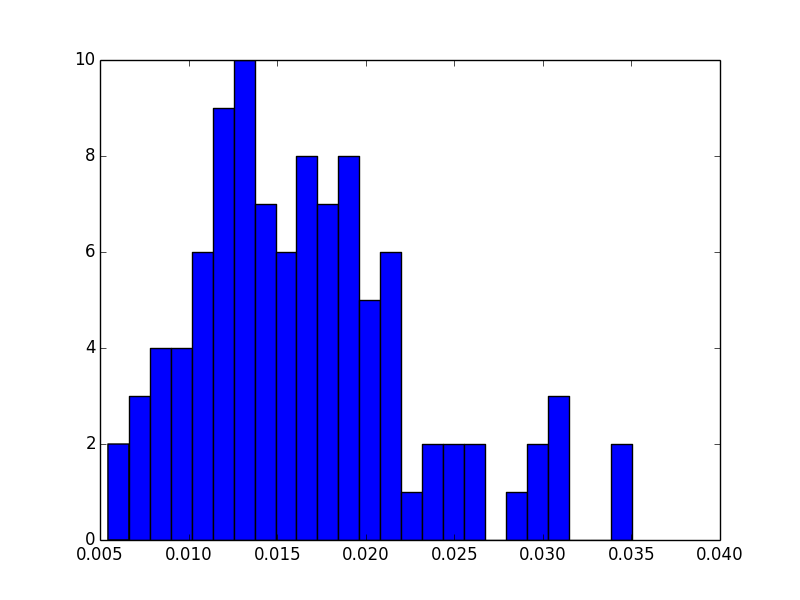}
\caption{(a), (c), (e) are the mean $H_1$ persistent rank function of $100$ i.i.d. spatial point patterns from the models of Strauss, Matern clustering and Baddeley-Silvermann, respectively.
(b), (d) and (f) are the corresponding distributions of distances squared to $\bar{\beta_1}$.}
\end{figure}

The Strauss process with interaction radius $R$ and parameters $\beta$ and $\gamma$ is the pairwise interaction point process with probability density
\begin{displaymath}
f(x_1, . . . , x_n) = \frac{1}{Z}\beta^{n(x)}\gamma^{s(x)}
\end{displaymath}
where  $\{x_1 , . . . , x_n\}$ represent the points of the pattern, $n(x)$ is the number of points in the pattern, 
$s(x)$ is the number of distinct unordered pairs of points that are closer than $R$ units apart, and $\frac{1}{Z}$ is the normalizing constant.  By conditioning on a fixed number of points the parameter $\beta$ is absorbed into the normalizing constant. We used $R=0.05$ and $\gamma=0.5$.

The Matern clustering model we used is as follows. First determine the location of the ``parent'' points over the plane by a Poisson distribution with intensity $10$. Then each parent point is replaced by a random cluster of ``offspring'' points, the number of points per cluster being $Poisson(10)$ distributed, and their positions being placed and uniformly inside a disc of radius $0.02$ on the parent point. We then discard everything outside the unit square. We conditioned on $100$ points in total (by redrawing when not $100$ points).

The Baddley-Silvermann spatial point process model provides a counterexample that CSR can be determined by the distribution of pairwise distances.
Divided the space into $100$ equal rectangular tiles. In each tile, a random number of random points is placed. There are either $0, 1$ or $10$ points, with probabilities
$1/10$, $8/9$ and $1/90$ respectively. The points within a tile are independent and uniformly distributed
in that tile, and the numbers of points in different tiles are independent random integers. Unlike the normal Baddley-Silvermann, we then conditioned on $100$ points in total (by redrawing when not $100$ points).

The Monte-Carlo simulations show that even when the number of points within the point patterns involved is small (here $100$), the rank functions can be used as a test for complete spatial randomness. As could be expected, the power of the test is much higher for repulsive models than clustering models. The power of the tests should improve as the number of points increases.  The intuition behind this claim is twofold. Firstly, a functional version of the central limit theorem would imply that the variation of the corresponding distributions of the persistent rank functions will decrease. In other words the distributions of persistent rank functions of each model will concentrate around the means (both for CSR and the other models) implying both that the cutoff radius will decrease and that the measure of the alternative model in that ball will decrease. Secondly, the effect of the boundary will decrease. This should be particularly true for the Matern Clustering model where inspection shows that there are a significant proportion of larger voids which do not appear as $H_1$ persistent homology classes because they appear near the boundary of the unit square.

\clearpage

\section{Functional Principal Component Analysis}\label{sec:PCA}

In this section we give a general outline of functional principal component analysis and how to compute it using the inner product matrix of a set of centered functions. This is a standard technique but is included here for completeness and for accessibility for those from other disciplines.

Given a set of functions $\{f_1, f_2, \ldots,  f_n\}$, whose mean is zero (constructed by just subtracting the mean from each of the functions in the initial set) we can perform functional principal component analysis. Functional PCA gives a sequence of principal component weight functions $\zeta_1, \zeta_2, \ldots, \zeta_{n-1}$ and principal component weight scores $s_{ij}$. These can be defined inductively. From now on let $\langle , \rangle$ denote an inner product on the space of functions $V$.

The first principal component weight function, $\zeta_1$, is defined to be the norm one function that maximizes $\sum_{i=1}^n \langle \zeta_1, f_i \rangle$. We define $\zeta_j$ to be the function that maximizes $\sum_{i=1}^n  \langle \zeta_j, f_i \rangle$ subject to the constraint that $\zeta_j$ is a norm one function which is orthogonal to $\zeta_1, \zeta_2, \ldots, \zeta_{j-1}$. We define $s_{ij}:=\langle \zeta_j, f_i\rangle$. 

Since the set of functions $f_1, f_2, \ldots f_n$ lies in an $n-1$ dimensional subspace of the space of functions we know $f_i=\sum_{j=1}^{n-1} s_{ij}\zeta_j$. 
However, since the principal component weight functions are ordered to be in the directions of greatest variance, approximating  each function $f_i$ by a linear combination of the first $k$ principal component weight functions provides the best $k$-dimensional representation of the sample functions.  
In other words, we can approximate $f_i$ by the partial sums $\sum_{j=1}^k s_{ij}\zeta_j$. How well these partial sums approximate the original data is often expressed in the proportion of explained variance. If $\lambda_j$ is the variance of the scores $\{s_{ij}\}$ (i.e. $\lambda_j = \sum_{i=1}^n s_{ij}^2$) then total variance is $\sum_{j=1}^{n-1} \lambda_j$ and hence the proportion of variance explained by the projection into the first $k$ coordinates is $$\dfrac{\sum_{j=1}^k \lambda_j}{\sum_{j=1}^{n-1} \lambda_j}.$$


When working with finite dimensional data, PCA proceeds by first creating a matrix $X$ where each row is a data point and each column is a coordinate axis (i.e., a variable).  
The principal components are then the eigenvectors of $X^TX$ ordered by the largest corresponding eigenvalue. 
These eigenvalues are also the variances of the corresponding sets of scores.  A simple adaption of PCA to functional data  represents each function by its values at a fixed, finite number of evenly spaced  coordinates; then each function becomes a row in the matrix $X$ and each column is a coordinate where the functions are evaluated.   
The matrix product $X^TX = (XX^T)^T$ has entries that are approximations to the Lebesgue inner product between functions.  
In this paper, however, we are working with functions whose inner product is defined with respect to a weighting function~(\ref{metric}) so we must use a slightly different adaption of the standard PCA matrix representation, interpreting the matrices as a linear operators as follows. 

Given a set of $n$ rank functions, $\{f_1, f_2, \ldots f_n\}$, define the linear operators 
\begin{align*}
X:L^2(\R^{2+},g)& \to \R^n\\
g&\mapsto (\langle g, f_1\rangle, \langle g, f_2\rangle, \ldots, \langle g, f_n\rangle)
\end{align*}
and
\begin{align*}
X^T:  \R^n&\to L^2(\R^2+,g)&\\
(a_1, a_2, \ldots, a_n)&\to \sum_{i=1}^n a_i f_i.
\end{align*}
A matrix representation of $X X^T$ acts on elements of $\R^n$ with the $(i,j)$ entry being $\langle f_i, f_j\rangle$. 
The non-zero eigenvalues and eigenvectors of $X^TX$ can then be found from the eigenvalues and eigenvectors of $XX^T$ because for $w \in \R^n$,  
\begin{displaymath}
XX^T w = \lambda w \implies (X^T X)X^Tw = \lambda X^T w. 
\end{displaymath} 
Thus, to find the principal component weight functions $\zeta_1, \zeta_2, \ldots, \zeta_n$ we first compute the eigenvalues and eigenvectors $\lambda_1, \ldots \lambda_n$ and $w_1, \ldots w_n$ of the matrix of inner products $(\langle f_i, f_j\rangle)_{i,j =1}^n$ and then set $\zeta_k$ to be the unit-norm function that is a scalar multiple of $X^T w_k$.

\subsection{Outline of algorithm}

We will now outline the process of computing PCA of discretized rank functions from a set of persistence diagrams. We start with persistence diagrams as there are many standard algorithms to compute persistence diagrams. The first step is to transform the persistence diagrams to rank functions. Discretize the plane by choosing a lattice over a relevant region over the plane above the diagonal. This provides a list of coordinates $\{(x_1,y_1), (x_2, y_2),  \ldots (x_M, y_M)\}$. Each rank function will be written as a length $M$ vector.

Given a persistence diagram we want to compute the corresponding rank function. Start with the zero vector of length $M$.  Then iterate through the points in a persistence diagram:  for each point $(a,b)$ in the diagram add $1$ to coordinate $l$ when with $a<x_l\leq y_l<b$. These are all the coordinates is the lower right triangle between $(a,b)$ and the diagonal.

Given the set of discretized persistence rank functions, written as vectors $\{v_1, v_2, \ldots v_N\}$), the algorithm for computing the mean and principal component weight functions is the following recipe; 
\begin{enumerate}
\item Calculate the mean $\bar{v}$ of $\{v_1, v_2, \ldots v_N\}$ coordinate wise (at each sample point $(x,y)$).
\item Construct new centered arrays $\{\tilde{v}_1, \tilde{v}_2, \ldots \tilde{v}_N\}$ by $\tilde{v}_i = v_i - \bar{v}$
\item Calculate the dot product matrix $D = (\langle \tilde{v}_i, \tilde{v}_j \rangle)_{i,j}$ using the weight function as specified in (\ref{metric}).  
\item Perform an eigenvalue/eigenvector decomposition of $D$, ordered by the size of the eigenvalues (largest first). 
\item Let $\lambda_j$ and $w_j=(a_1, a_2, \ldots a_N)$ be an eigenvalue/eigenvector pair of $D$. $\lambda_j$ is the variance of the scores for the $j$th principal component. The $j$th principal component is the unit-normed function in the direction $\sum_{i=1}^N a_i \tilde{v}_i$.  Thus the $j$th principal component is $\zeta_j=\frac{1}{\|\sum_{i=1}^N a_i \tilde{v}_i\|}\sum_{i=1}^N a_i \tilde{v}_i$ where 
$$\|\sum_{i=1}^N a_i \tilde{v}_i\|^2=\sum_{i=1}^N \sum_{l=1}^N a_i a_l D_{i,l}$$
\item We then can then reinterpret $\zeta_j$ as the discretized function values evaluated at the sample points $\{(x_l,y_l)\}_{l=1, \ldots M}$.  
\end{enumerate}

Once we have computed as many principal component functions as desired we then compute the corresponding scores, i.e. projections of the (centered) rank functions onto the $k$ largest principal components. 
These scores are $s_{ij} = \langle \zeta_j, \tilde{v}_i\rangle$, i.e., dot products of the rank functions with the principal component  functions. Alternatively they can be computed directly from the dot product matrix;
\begin{displaymath}
s_{i,j}= \frac{\sum_{m=1}^N a_m D_{m,i}}{\sqrt{\sum_{k=1}^N \sum_{l=1}^N a_k a_l D_{k,l}}}.
\end{displaymath}


\section{Analysis of Colloids}\label{sec:colloids}


We now apply the PCA techniques to two-dimensional point patterns obtained from images of colloids in an experimental model  designed to test theories of melting in 2D crystals~\cite{gasser_melting_2010}.\footnote{
We thank Peter Keim for giving us permission to use this data. }  
The experimental system consists of paramagnetic colloidal particles of diameter a few microns, suspended in water and fixed by gravity to sit at the air-water interface of a hanging droplet.  An applied magnetic field induces a repulsive dipole-dipole interaction between the particles, and the (inverse of) applied field strength plays the role of temperature.  
At strong field strengths the colloids arrange themselves in a hexagonally close-packed crystal with no defects, and as the field strength decreases the crystal melts, passing through two continuous phase transitions to an isotropic liquid phase.  
The intermediate \emph{hexatic} phase retains the long-range 6-fold orientational order of the crystal but not its quasi-long-range  translational order.  
This two stage melting of 2D crystals is described by a theory of Kosterlitz, Thouless, Halperin, Nelson, and Young (KTHNY) that explains it through the dissociation of pairs of defects that form close to the melting point. 

Extensive studies of the point patterns formed by the colloid centres~\cite{dillmann_comparison_2012} have demonstrated that the bond orientational correlation function, and bond orientational susceptibility parameter are sensitive to the transition between hexatic and isotropic liquid phases, as these quantities explicitly test for long-range 6-fold orientational order.  
The transition between hexatic and crystalline phases should in principal be detected by periodicity in the density (or two-point) correlation function, but in practice this is found to be insufficient as 2D crystals have a logarithmic divergence in their displacement autocorrelation function.  
Instead, the melting transition is better detected by the ``dynamic 2D Lindeman parameter'' which is a measure of a particle's displacement with respect to its nearest neighbours as a function of time.  This quantity stays bounded in a crystal (if no grain boundaries are present) but diverges in the hexatic and isotropic liquid phases.  
The work in~\cite{dillmann_comparison_2012} also shows that other quantities that measure purely local structural properties, such as the Voronoi cell shape factor or Minkowski functionals, do not provide a sensitive test of the phase transitions.  

Ideally, it would be useful to find a single order parameter that changes at both phase transitions.  The persistent rank functions of alpha shapes are a possible candidate as they are sensitive both to the local arrangements of particles, but also naturally incorporate higher-order point correlations and non-local topological information.  The data presented here is an initial investigation that illustrates the basic properties of persistent homology signatures of colloidal point patterns.  

The data used are four groups of seven snapshots of around 800 colloidal particles;
 each group is taken from a different effective temperature, $T = 0.013, 0.014. 0.015. 0.016$.  These images were taken from the centre of a droplet that contained around $10^5$ particles in an equilibrium state.  Example point patterns and their 1-dimensional persistence diagrams are shown in Fig.~\ref{fig:colloid_pts_PDs}.  The filtrations are derived from the 2D alpha-shape of the colloid centres.  Samples at $T = 0.013$ and $0.014$ are crystalline but close to the melting transition, while those with $T=0.015$ are in the hexatic phase and $T=0.016$ are isotropic liquid.   

The principal component analysis of the persistent rank functions for these 28 point patterns shows that $96.7\%$ of variation in 0-dimensional homology is explained in the first component, $97.9\%$ in the first two components, and $99\%$ in the first five.  For 1-dimensional homology the variation explained is lower, $86.4\%$, and $92.0\%$ reaching $99\%$ with 15 components.  
The difference in variation explained here possibly reflects the fact that the overall density of colloids does not change, meaning the 0-dimensional homology information is more consistent across the different effective temperatures, while the appearance of dislocation pairs (from 6,6 to 5,7 nearest neighbours) as T increases, means 1-dimensional homology changes more significantly.  
From the plots of principal component scores in Fig.~\ref{fig:colloid_pca_scores}, we see that in both dimensions, the first p.c.~scores cluster the point patterns into their different phases quite effectively.  The point patterns for $T = 0.013, 0.014$ are noticeably closer together than to the other two temperature groups, consistent with them both being in the crystalline phase.  
The second principal component scores in dimension 0 do nothing to help with the clustering, suggesting that the extra variation explained is physically irrelevant.  
The second p.c.~scores in dimension 1 do seem to provide a little more information, and suggest that one each of the point patterns from $T=0.014, 0.015$ might be anomalous in some way.   
A plot of first p.c.~scores from dimension 0 versus dimension 1 (Fig.~\ref{fig:colloid_dim0_dim1_fpc}) shows a roughly linear relationship, and nice clustering again of the different temperature groups.  

In conclusion, the variation in persistence diagrams for these colloid data is difficult to detect by eye, but the PCA analysis has successfully clustered them into their temperature groups.  The basic persistent homology data as presented here appears to change continuously with the effective temperature, and is unlikely to be a sensitive test of the critical phase transition point.  However, it may be possible to derive a quantity from the persistence analysis that will be sensitive, or to make a persistence analysis of a different function derived from the point patterns.  

\begin{figure}[h]
\begin{center}
\includegraphics[width=0.4\textwidth]{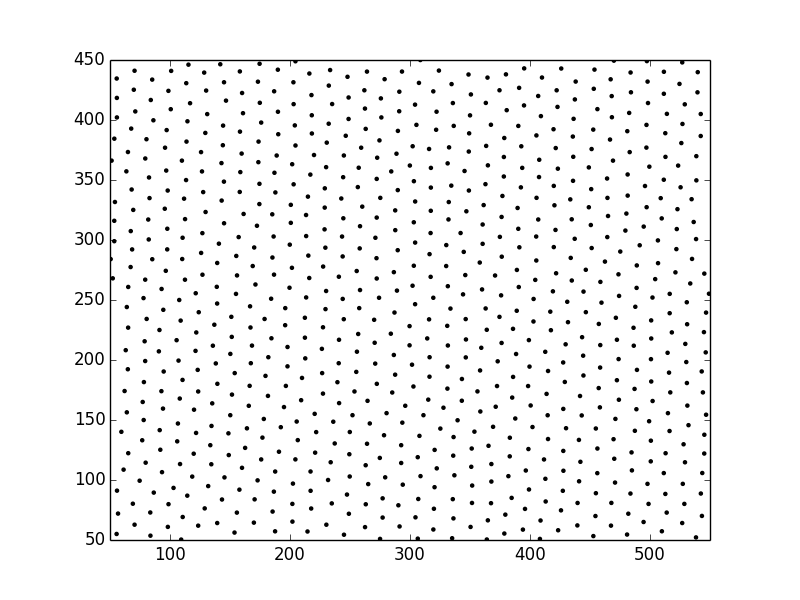}
\includegraphics[width=0.4\textwidth]{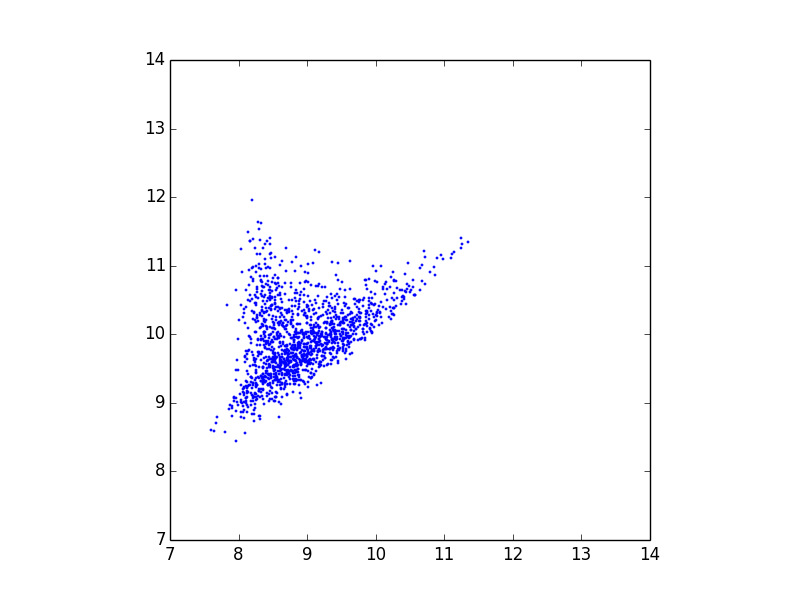} \\
\includegraphics[width=0.4\textwidth]{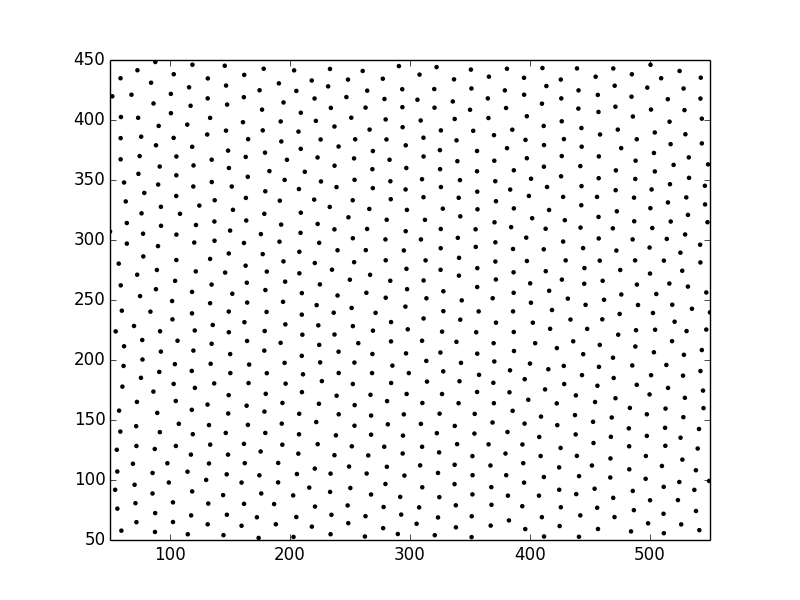}
\includegraphics[width=0.4\textwidth]{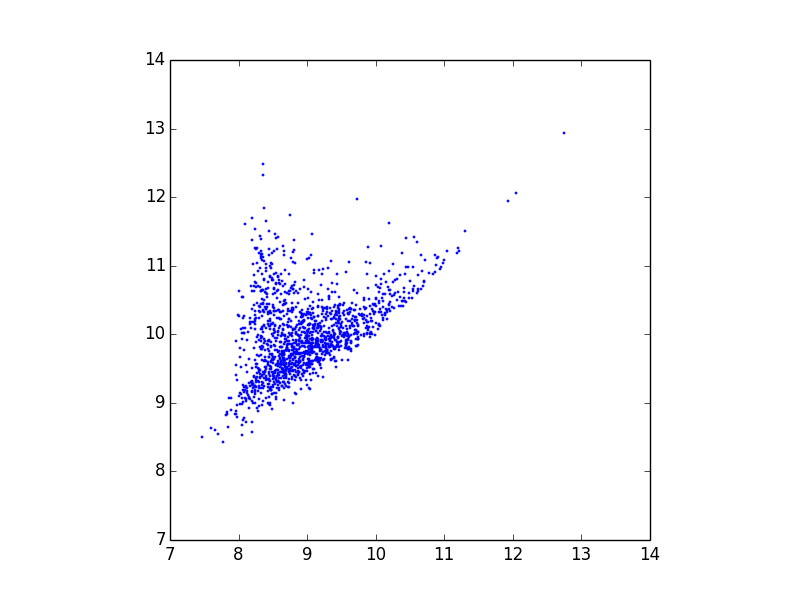} \\
\includegraphics[width=0.4\textwidth]{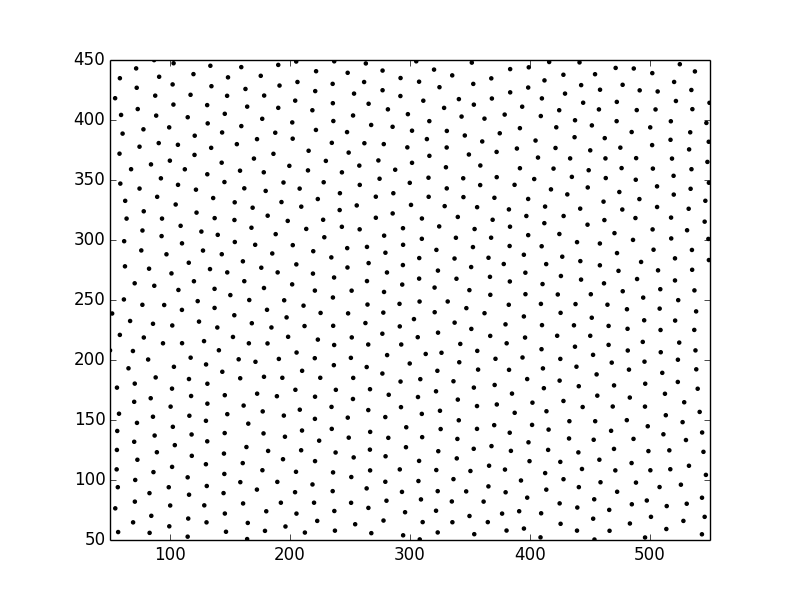}
\includegraphics[width=0.4\textwidth]{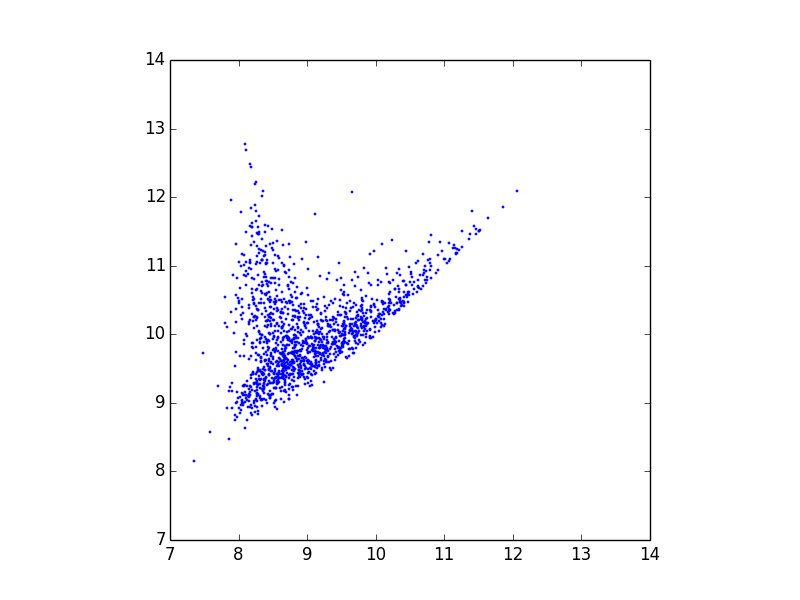} \\
\includegraphics[width=0.4\textwidth]{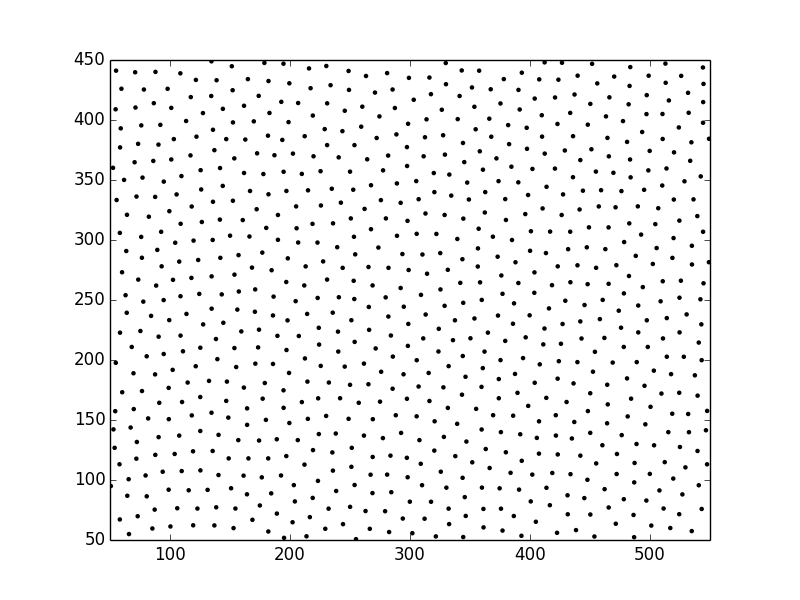}
\includegraphics[width=0.4\textwidth]{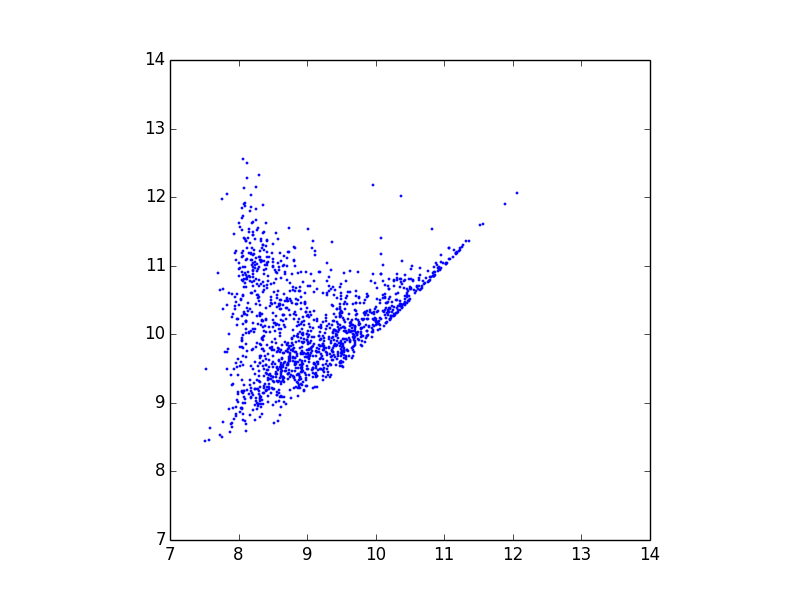} \\
\caption{Point patterns formed by colloidal particles (left column) and their corresponding 1-dimensional persistence diagrams (right column).  One example from each group of seven snapshots is shown, from top to bottom the effective temperature increases from $0.013, 0.014, 0.015. 0.016$.  Units of length are micrometers. }
\label{fig:colloid_pts_PDs}
\end{center}
\end{figure}

\begin{figure}[h]
\begin{center}
(a)\includegraphics[width=0.3\textwidth]{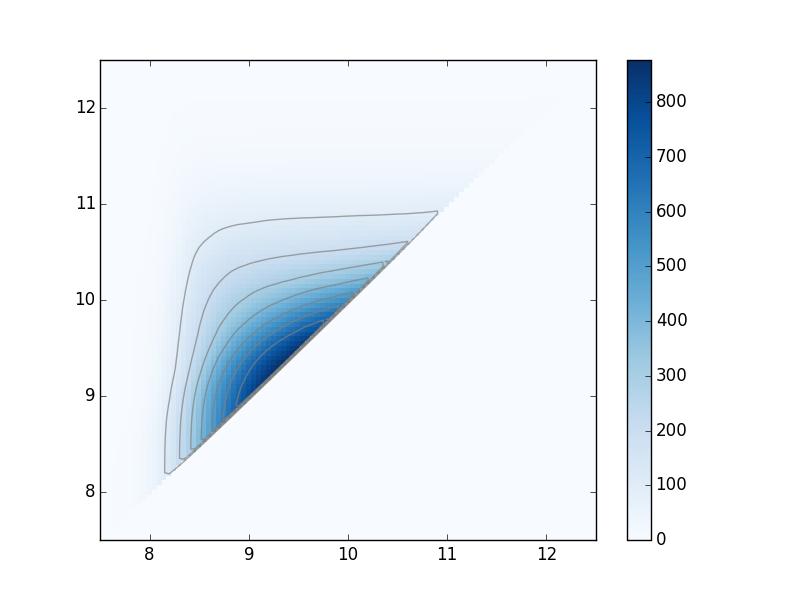}
(b)\includegraphics[width=0.3\textwidth]{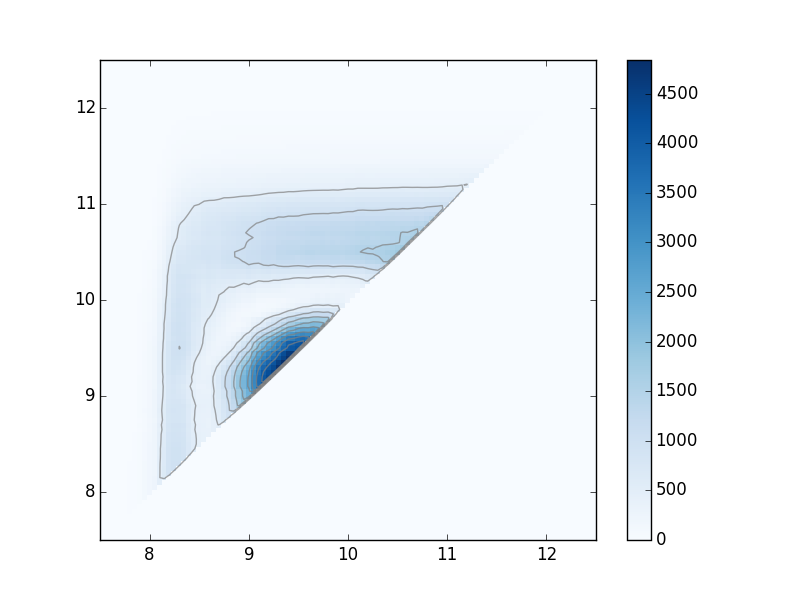} 
(c)\includegraphics[width=0.3\textwidth]{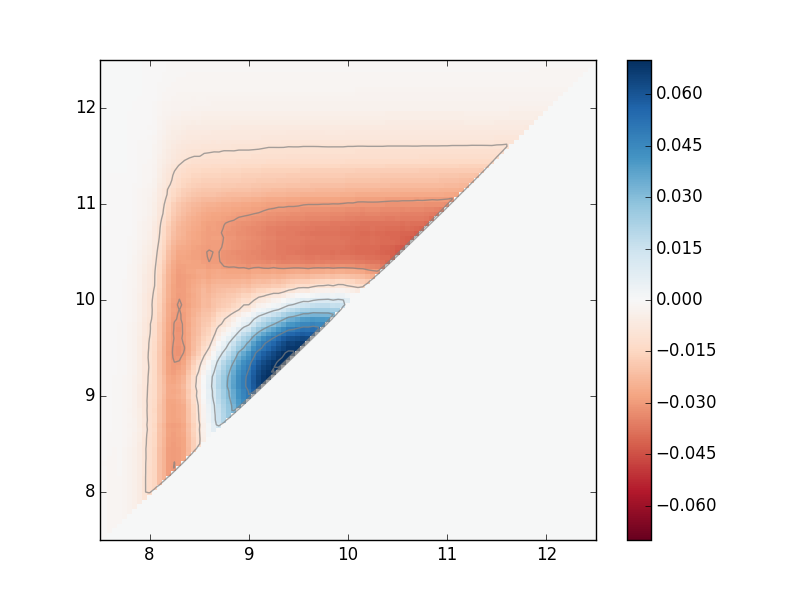}
\caption{(a) Mean rank function for $\beta_1(s,t)$ of 28 colloidal point patterns.  (b) Pointwise variance in the $\beta_1$ rank functions. (c) First principal component function.  Contours in (a) and (b) match the tick marks on the colour bar, contours in (c) are $\pm0.01, \pm 0.03, \ldots$. }
\label{fig:colloid_mean_var_fpc}
\end{center}
\end{figure}

\begin{figure}[h]
\begin{center}
(a)\includegraphics[width=0.45\textwidth]{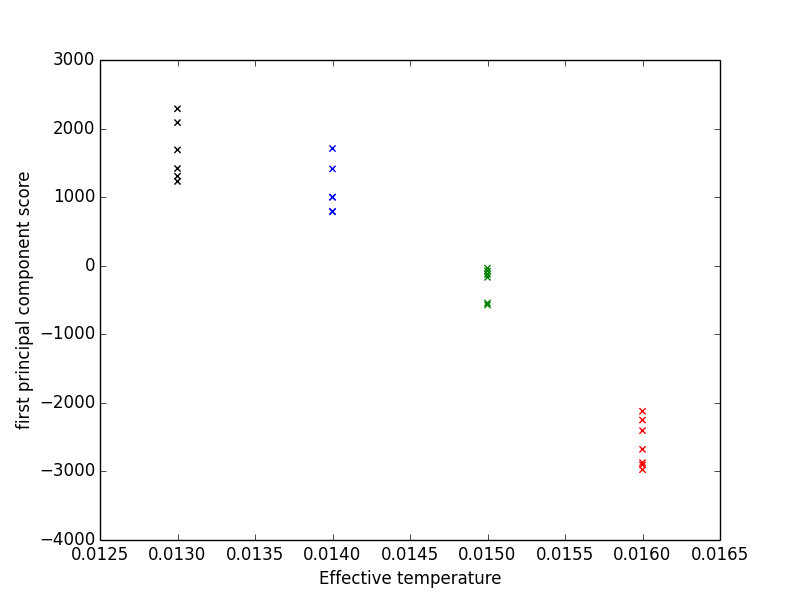}
(b)\includegraphics[width=0.45\textwidth]{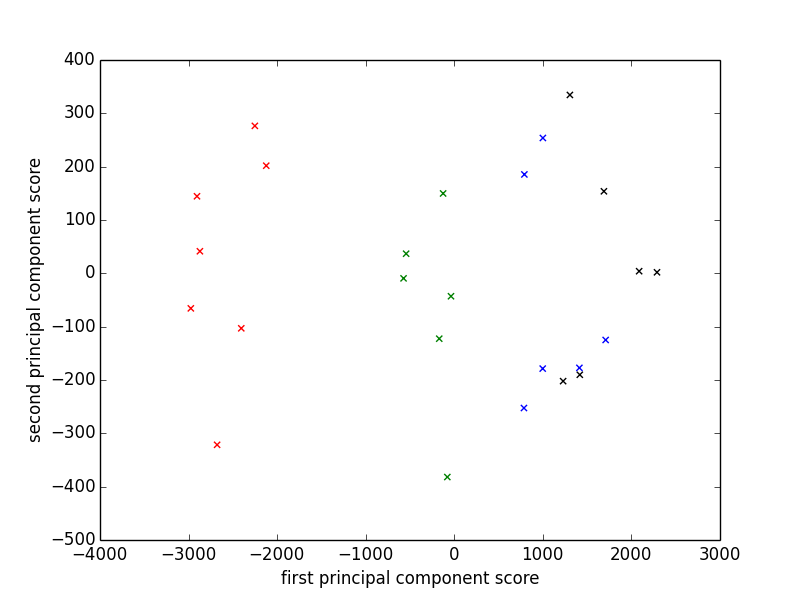} \\
(c)\includegraphics[width=0.45\textwidth]{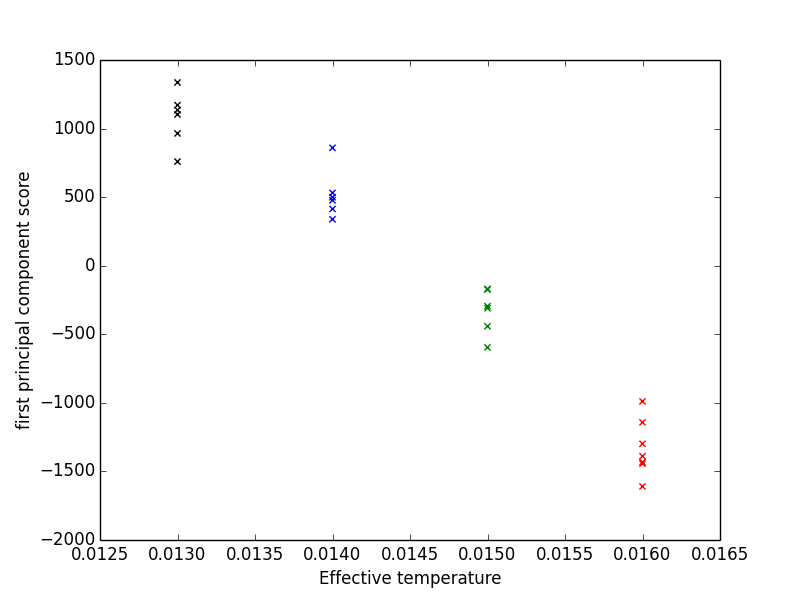}
(d)\includegraphics[width=0.45\textwidth]{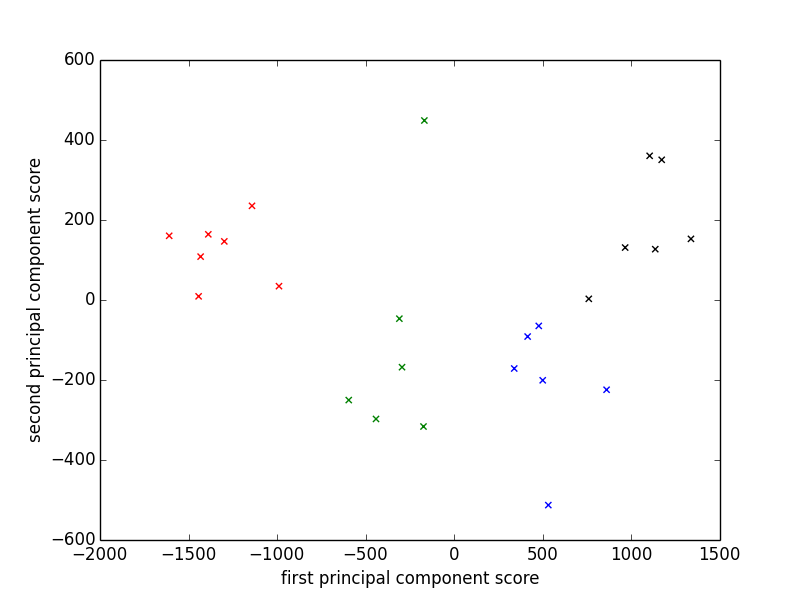}
\caption{Principal component score relationships for the 28 colloidal point patterns.  (a) First principal component scores versus effective temperature for 0-dimensional persistent homology. (b) First versus second p.c. scores for $\beta_0$.  (c) First principal component scores versus effective temperature for 1-dimensional persistent homology. (d)  First versus second p.c. scores for $\beta_1$.  The colours denote the effective temperature groups.}
\label{fig:colloid_pca_scores}
\end{center}
\end{figure}

\begin{figure}[h]
\begin{center}
\includegraphics[width=0.45\textwidth]{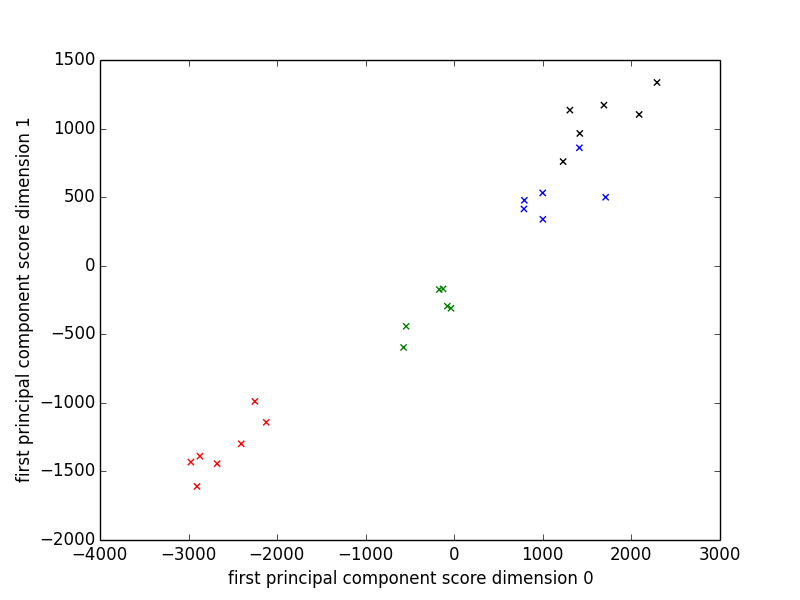}
\caption{First principal component scores in dimension 0 versus dimension 1.  Colour again denotes the effective temperature groups as in Fig.~\protect{\ref{fig:colloid_pca_scores}}. }
\label{fig:colloid_dim0_dim1_fpc}
\end{center}
\end{figure}

\clearpage

\section{Analysis of Sphere Packings}\label{sec:spheres}

In this section we apply the functional PCA methods outlined in Section \ref{sec:PCA} to three-dimensional point patterns obtained from packings of spherical acrylic beads in cylindrical containers.\footnote{
We thank Mohammad Saadatfar for providing us with the data. } 
The points are the centers of the beads and the data sets are non-overlapping cubical subsets interior to the container.  
We find that the rank functions $\beta_1(s,t), \beta_2(s,t)$ computed from alpha shape filtrations each have around $97\%$ of their variation explained by the first principal component and a further $2\%$ when adding the second principal component.  
The scores against the first principal component relate strongly to the packing fraction $\phi$ of the bead-packs.   
This supports the physical assumption that packing fraction is the dominant parameter influencing the configurations of bead packings.  

Monosized spherical bead packings are an idealised model of granular materials.  In the language of statistical physics, they are \emph{athermal}, \emph{dissipative} and \emph{non-equilibrium} systems and so fail to conform to the requirements of standard equilibrium statistical physics models.  Despite this, disordered bead packings do have reproducible distributions of localised quantities that appear to depend on just a few macroscopic parameters, and there have been a number of statistical models since Edwards first proposed that volume could play the role of energy~\cite{edwards_theory_1989}, see the recent review~\cite{bi_statistical_2015}. 
For example, the distribution of localised volumes such as those of Voronoi cells or Delaunay tetrahedra have theoretically-derived distributions depending on the global volume fraction~\cite{aste_invariant_2007}. 

One of the main physical observations about monosized spherical bead packings is that although a maximally dense packing of beads has $\phi = 0.74$~\cite{hales_proof_2005}, there appears to be a random close packing `limit' at $\phi = 0.64$ in the sense that gentle tapping of a container of beads results in this packing fraction, and to obtain a denser packing requires significantly more energy to be injected into the system.  
A maximally dense packing of beads is built from hexagonally close-packed layers that contain local packing configurations of regular tetrahedra and octahedra in a 2:1 ratio, whereas packings with $\phi < 0.64$ are now understood as being built of beads in quasi-regular tetrahedral configurations. 
The geometry of local bead configurations in large packings has been explored extensively using the Voronoi partition and the dual Delaunay complex built from the bead center points.
Results for simulated hard sphere packings in~\cite{anikeenko_shapes_2009,lochmann_statistical_2006} show that packings with $\phi > 0.645$ show a significant increase in the number of Delaunay tetrahedra with quarter-octahedral shape.  
Packings with $\phi \approx 0.64$ have also been shown to have almost all beads belonging to face-sharing quasi-regular tetrahedral configurations~\cite{anikeenko_polytetrahedral_2007,francois_geometrical_2013}. 

In this section, we capture the local configurations of beads via persistent homology diagrams derived from alpha-shape (\v{C}ech)  filtrations of the Delaunay complex.  
This means the analysis is closely related to the above techniques, but the advantage is that persistent homology detects  correlations between adjacent Delaunay tetrahedra in a geometrically and topologically meaningful way.  
For example, rather than testing for the presence of octahedra indirectly by computing a shape parameter to select quarter-octahedra, the persistence information captures an octahedral configuration unambiguously by a point in PD2 with $(b,d)$ = $(1.15, 1.41)$.
The birth value for the PD2 point is the circumradius of a face of the octahedron, i.e., an equilateral triangle, and the death value is the circumradius of the octahedron with vertices at the centers of beads of radius 1.   
This range of radii $(1.15, 1.41)$ is exactly that for which the union of alpha-shape balls enclose a void inside the octahedron. 
For a regular tetrahedron, the corresponding range is $(1.15, 1.22)$.   
Further details about the information contained in persistence diagrams are given below.  
Other recent work applying  persistent homology to study granular packings has considered two-dimensional packings of disks, examining force chains~\cite{kramar_quantifying_2014}, the dependence of Betti numbers on preparation protocol,~\cite{ardanza-trevijano_topological_2014}, and the structure of the full configuration space of a small number of hard disks~\cite{carlsson_computational_2012}.  

The data used in this section were obtained from three experiments imaged with the x-ray micro-CT facility at the Australian National University~\cite{aste_geometrical_2005,francois_geometrical_2013}. 
The experiments each used monosized (within $2.5\%$) acrylic beads with a mean diameter of $1.0$mm that were poured into large cylindrical containers (inner diameter of $66$mm) with over 100,000 beads in each case. 
To create the low-density packing, $A$, with overall packing fraction $\phi = 0.59$, a glass rod was placed in the cylinder as the beads were poured in and then gently removed.  
The second experiment, $B$, imaged a random close packing of beads ($\phi =0.63$) after pouring them into the container and then tapping gently to settle. 
In the third experiment, $C$, a loose lid was placed on top a random close packing of beads, the container was shaken intensely for a few seconds to the point of fluidisation and partial crystallisation was observed with resulting $\phi = 0.70$.  
Each container of beads was imaged using micro-CT and the resulting volume maps of x-ray density were processed to extract the coordinates of each bead centre and its radius to within $1.5 \mu$m. 

Our first step in analysing this data is to scale the units of length in each experiment by the mean bead radius of the sample.
We then take disjoint cubical subvolumes of equal sizes from the three experimental data sets and compute the persistence diagrams of alpha-shape filtrations derived from the 3D point patterns of bead centres using the dionysus package~\cite{morozov_dionysus_2013}.  
Persistence diagrams for subsets of between 3000 to 3800 beads from each experiment are shown in Fig.~\ref{fig:beadpack_PD_histograms}.  
These diagrams illustrate a number of interesting features in the geometry of bead packings. 
We begin by noting that the persistence diagrams for 0-dimensional homology simply record the number of beads in the sample and the fact that they are all in contact at the mean bead radius. 
The mean bead radius does not vary significantly across the different subsets so we do not use the PD0 data further.  
PD1 points tell us about the geometry of shortest cycles: birth values record the longest edge-length in a cycle of bead contacts or near-contacts, the death value of each cycle is the circumradius of the largest empty triangle within the span of that cycle.  
The PD1 diagrams in Fig.~\ref{fig:beadpack_PD_histograms} have three main features.  The first is a ``hot spot'' around $(b,d) = (1.0, 1.15) = (1, \tfrac{2}{\sqrt{3}})$, corresponding to an equilateral triangle configuration of beads.  Then there are two approximately one-dimensional features extending from the hot spot, one with $b \approx 1.0, d < 1.4$ and the other along an arc reaching up to $(b,d) = (1.41, 1.41)$, the signature of a right-angled triangle with hypotenuse equal to $2\sqrt{2}$.  
Cycles with $b \approx 1.0$ are due to beads in contact. Given the $2.5\%$ polydispersity in the bead radii, we conclude that those PD1 points with  $b \approx 1.0$ and $d> 1.18$ must be due to (minimal) cycles with four or more beads in contact.  Such cycles have a significant presence in the random loose and close packings, $A, B$, but are dramatically reduced in the densest packing, $C$.  
The cycles along the arc from $b = 1.0$ to $1.41$ are due to triangular configurations with longest edge $= 2b$.  Again, these have a  strong signature in $A$ and $B$, but not in the densest packing $C$.   
The dense packing however, has a second hot spot at $(b,d) = (1.41, 1.41)$ due to right-angled triangles in octahedral bead configurations.  
The presence of octahedral bead configurations is shown extremely clearly in the PD2 diagram of the dense packing, $C$.  
This diagram has three hot spots: one at the tetrahedral point $(1.15, 1.22) = (\tfrac{2}{\sqrt{3}}, \tfrac{\sqrt{6}}{2})$, one at the regular octahedral point $(1.15, 1.41) = (\tfrac{2}{\sqrt{3}}, 2\sqrt{2})$ and one at the right-angled tetrahedral point $(1.41, 1.41)$. 
PD2 diagrams of the random bead packings are significantly different from that of the dense packing.  
The random packings, $A$, $B$, show an arc of PD2 points extending from the regular tetrahedron up to the right-angled tetrahedron that can be bounded below by tetrahedral configurations with a single long edge, and bounded above by tetrahedra with two long edges (opposite one another).  
Regular octahedral configurations are not present in the random packings, but there is a diffuse cloud of points with $1.2<b<1.4$, $d< 2.0$.  
These persistence points are related to the less dense regions of the packings.  
A comprehensive analysis of the information encoded in the persistence diagrams of real bead packings is given in another paper~\cite{saadatfar_persistent_2015}.  

\begin{figure}[h]
\begin{center}
(a)\includegraphics[width=0.3\textwidth]{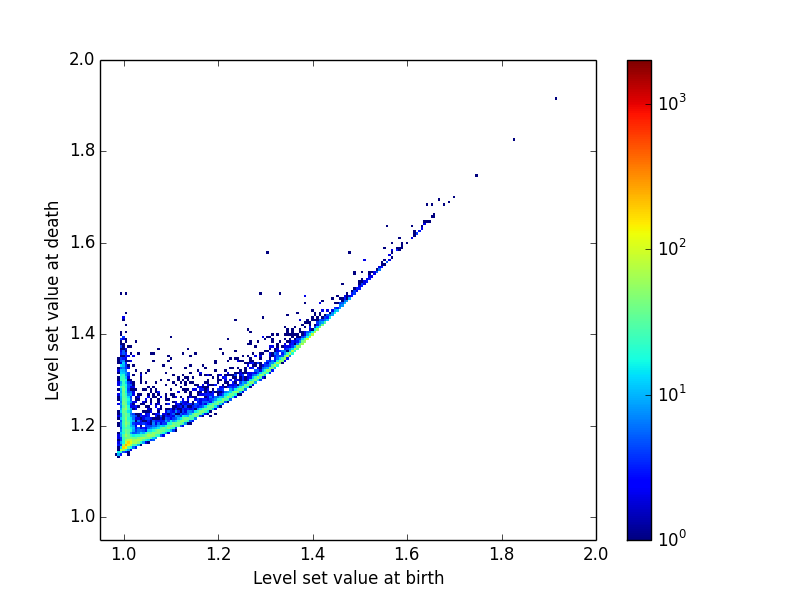}
(b)\includegraphics[width=0.3\textwidth]{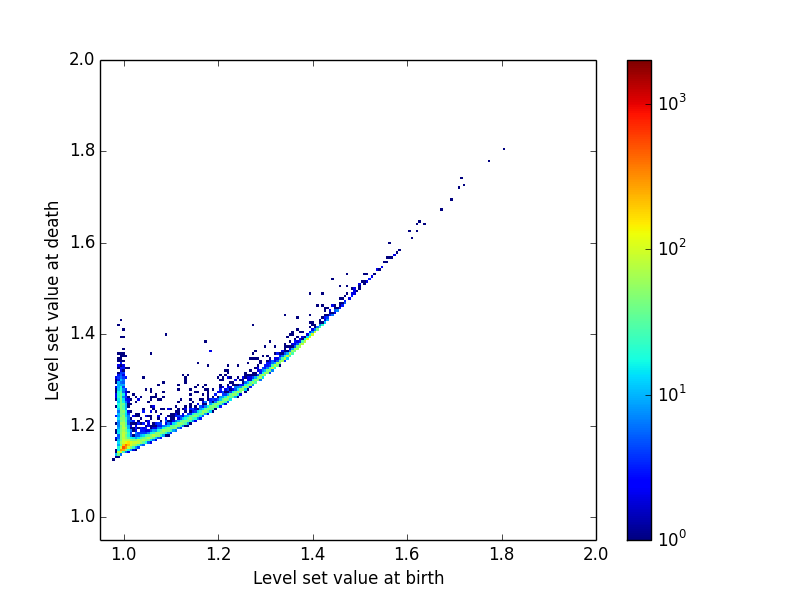}
(c)\includegraphics[width=0.3\textwidth]{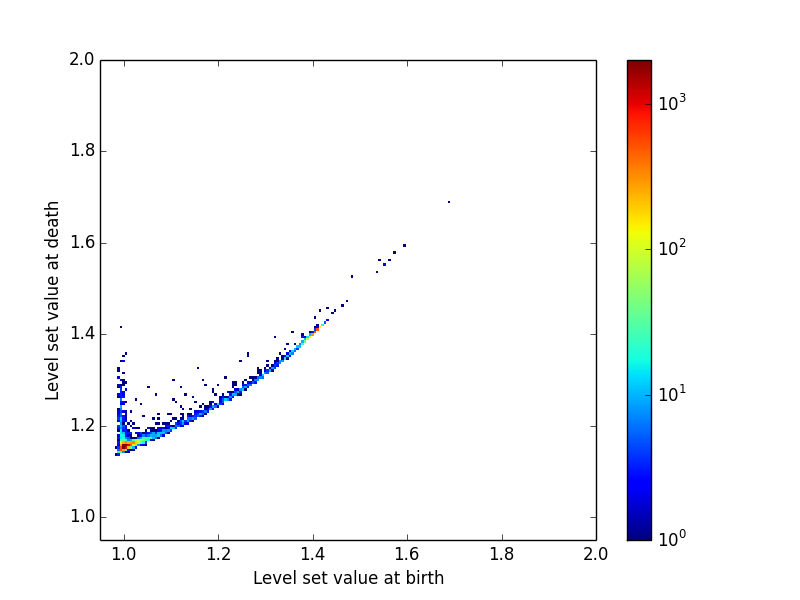}\\
(d)\includegraphics[width=0.3\textwidth]{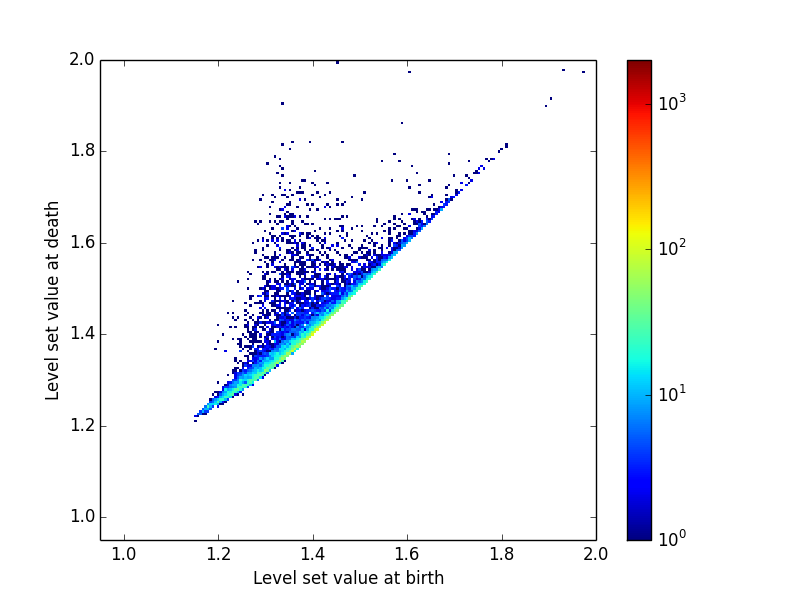}
(e)\includegraphics[width=0.3\textwidth]{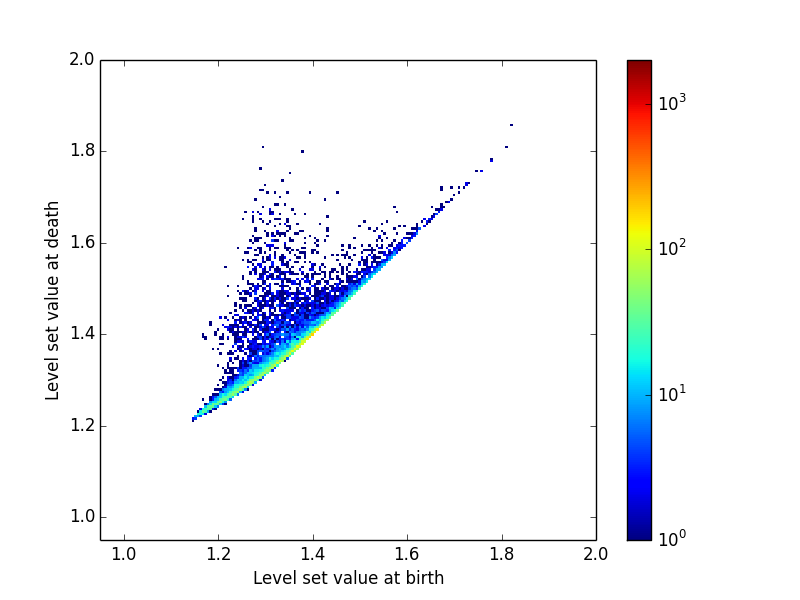}
(f)\includegraphics[width=0.3\textwidth]{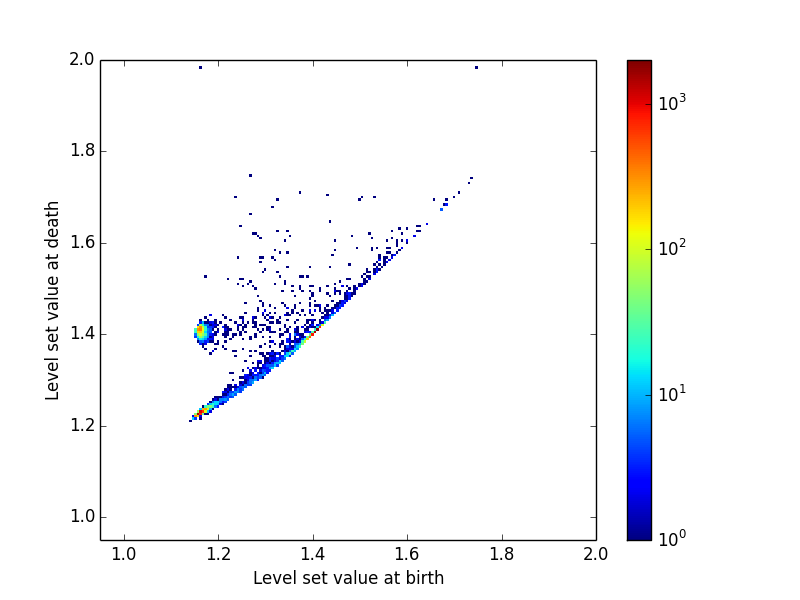}
\end{center}
\caption{Persistence diagram histograms from a $28.0^3$ subset of each experiment.  (a),(b),(c) PD1 for subset volume fractions, $\phi = 0.58, 0.63, 0.72$. (d),(e),(f) PD2 for the same subsets.}
\label{fig:beadpack_PD_histograms}
\end{figure}

To test the apparent dependence of persistence diagrams on volume fraction, we computed 1- and 2-dimensional rank functions for 12 cubical subsets with an edge length of 28.0 units from each of the three experiments, calculated their pointwise mean and variance and performed PCA. 
Mean and variance functions for $\beta_1(s,t)$ and $\beta_2(s,t)$ per unit volume are shown in Fig.~\ref{fig:beadpack_mean_variance_rank_functions} and again highlight the importance of the tetrahedral and octahedral bead configurations. 
The large number of equilateral triangle configurations generates a sharp corner at $(s,t) = (1.0, 1.15)$ in both the mean and variance of $\beta_1(s,t)$, and the regular tetrahedral and octahedral configurations produce significant steps in the mean and variance of $\beta_2(s,t)$ at $(1.15, 1.22)$ and $(1.15, 1.41)$ respectively.

\begin{figure}[h]
\begin{center}
(a)\includegraphics[width=0.4\textwidth]{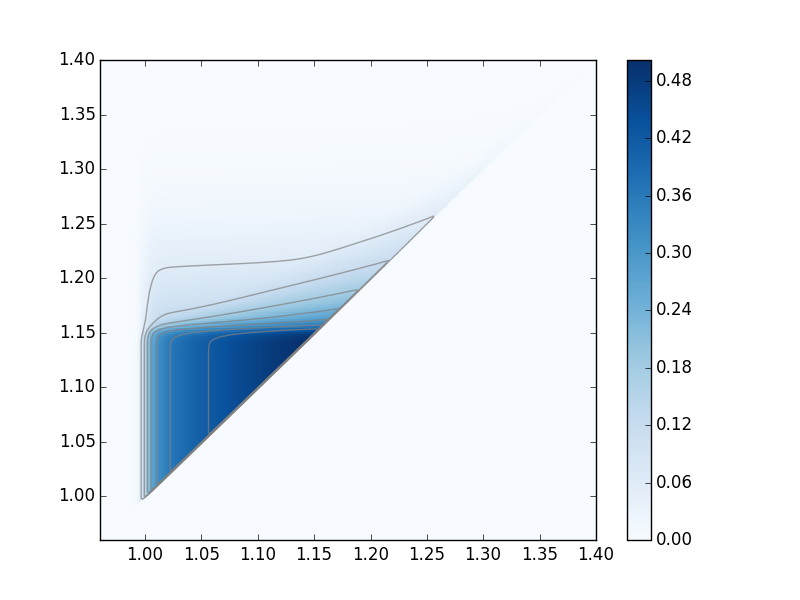}
(b)\includegraphics[width=0.4\textwidth]{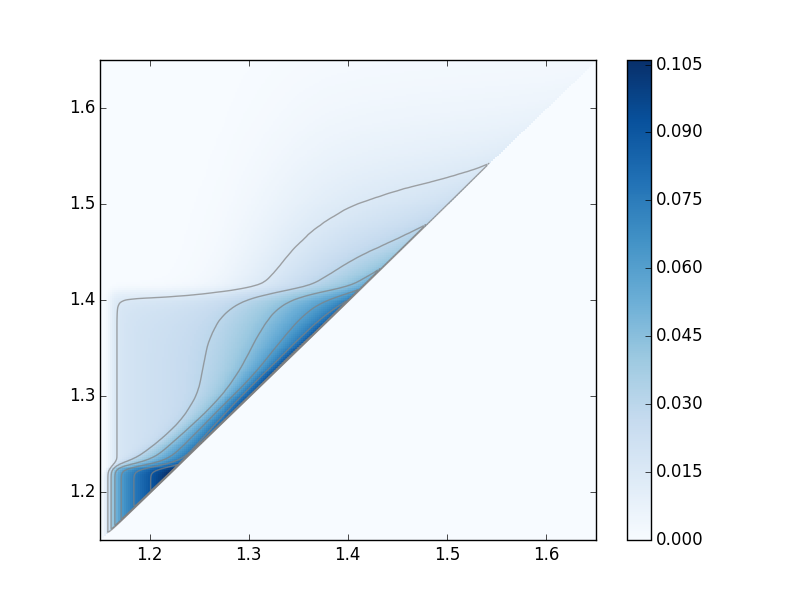} \\
(c)\includegraphics[width=0.4\textwidth]{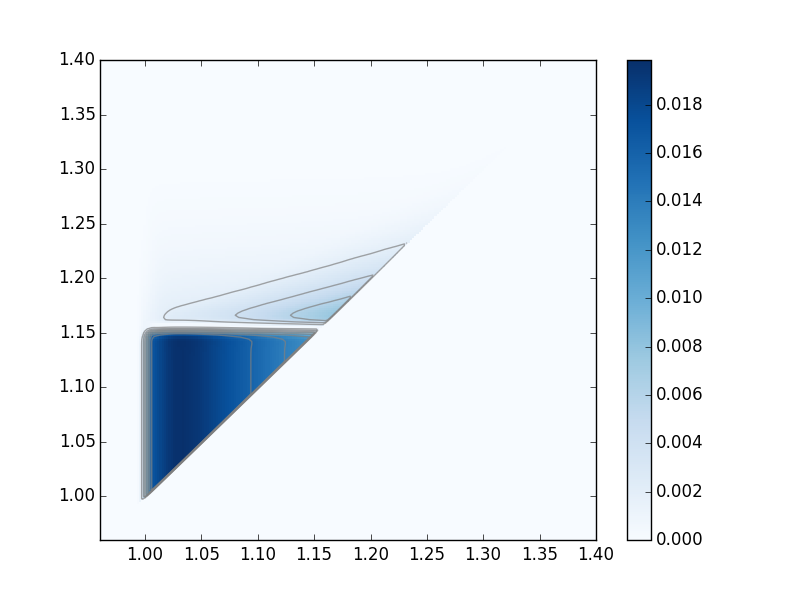}
(d)\includegraphics[width=0.4\textwidth]{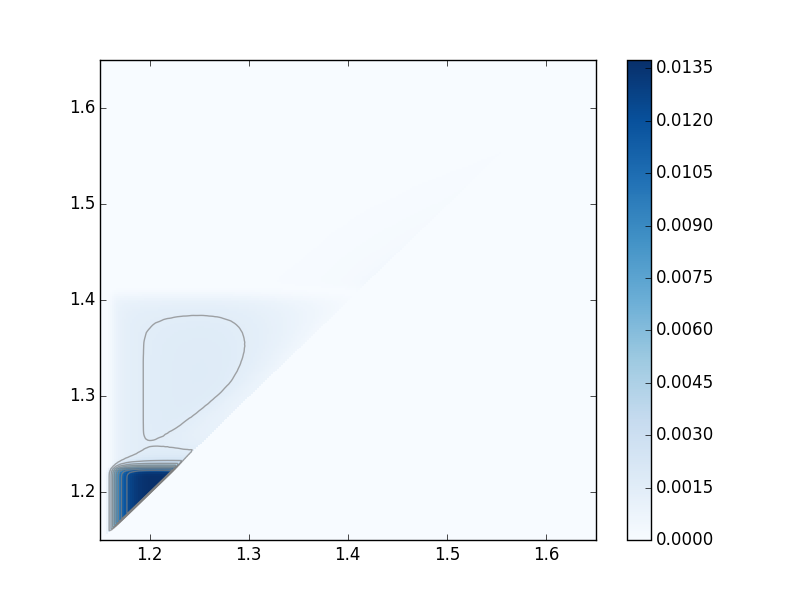} 
\end{center}
\caption{Mean and variance of 1- and 2-dimensional persistent homology rank functions per unit volume from 36 cubical subsets with side $l = 28.0$.   (a) Mean rank for $\beta_1(s,t) / l^3$, (b) variance function for $\beta_1(s,t) / l^3$, (c) mean rank for $\beta_2(s,t) / l^3$, (b) variance function for $\beta_2(s,t) / l^3$.  Contour lines match the tick marks on the colour scale in each plot. }
\label{fig:beadpack_mean_variance_rank_functions}
\end{figure}

The PCA analysis of these 36 subsets shows that $97.7\%$ and $97.1\%$ of the variation in the 1- and 2-dimensional rank functions is explained by a single axis, $99.2\%$ and $99.4\%$ by two-dimensional subspaces respectively.  
The first and second principal component functions are illustrated in Fig.~\ref{fig:beadpack_first_second_princ_comps}.  
Again, the first principal component functions highlight the importance of the regular tetrahedral and octahedral configurations, while the second principal component functions appear to be correcting areas where the first p.c. functions change rapidly.  
In particular the second p.c. functions seem to be related to the variation of regularity amongst the triangles (in dimension 1) and amongst the tetrahedra and octahedra (in dimension 2) which may be partly explained by the polydispersity in bead radii.

We project each subset rank function onto these two principal components and see that the scores cluster according to the subset  volume fractions, see Fig.~\ref{fig:beadpack_PCA_scores}.
The first p.c. scores show a particularly strong relationship with packing fraction.   
The results for $\beta_2(s,t)$ in particular suggest a sharp transition at the Bernal density of $\phi = 0.64$.  
The second p.c. scores are not directly related to the subset volume fractions, but do show some trend within each of the three experiments.

\begin{figure}[h]
\begin{center}
(a)\includegraphics[width=0.4\textwidth]{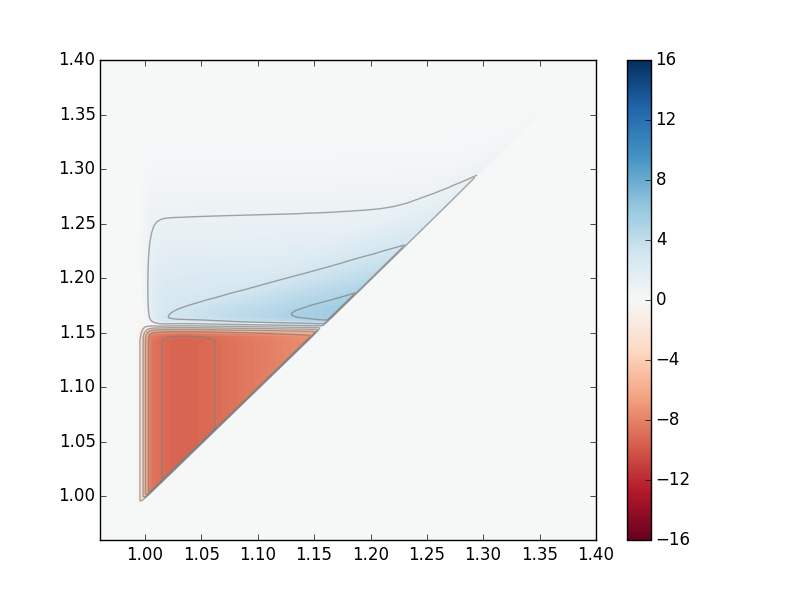}
(b)\includegraphics[width=0.4\textwidth]{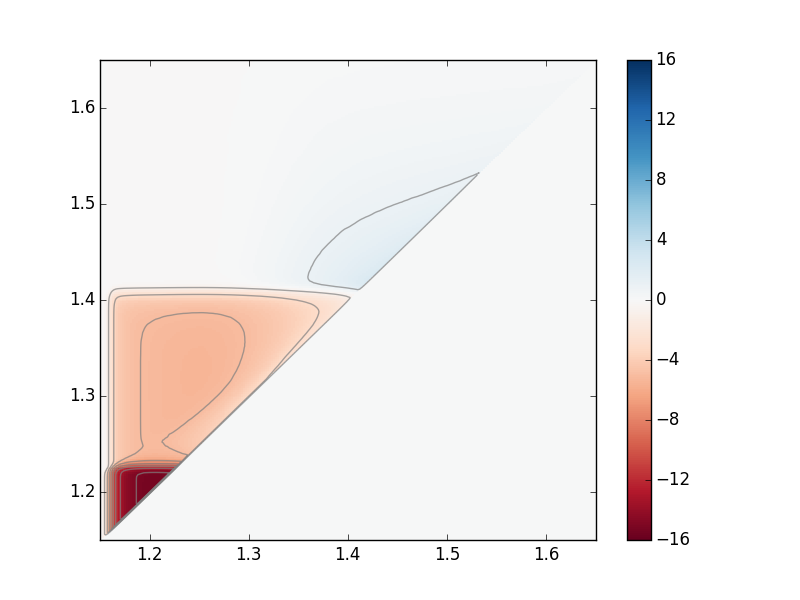} \\
(c)\includegraphics[width=0.4\textwidth]{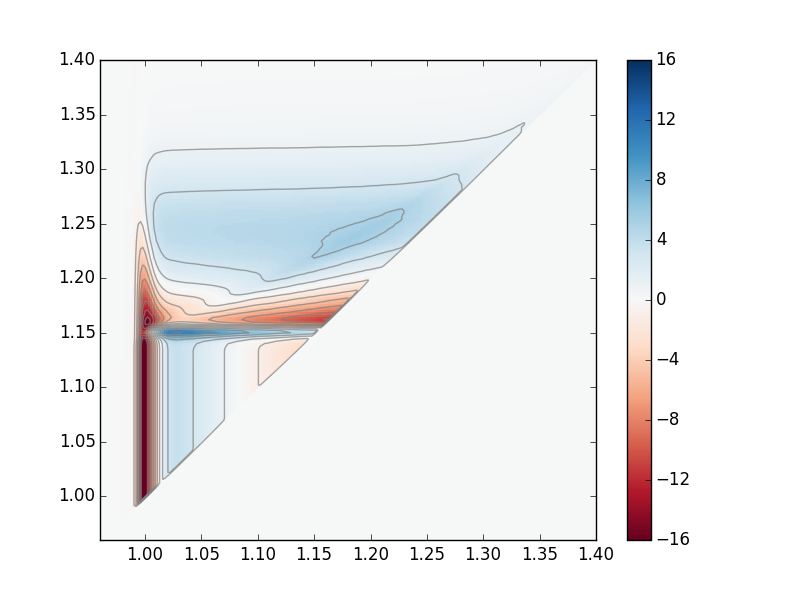}
(d)\includegraphics[width=0.4\textwidth]{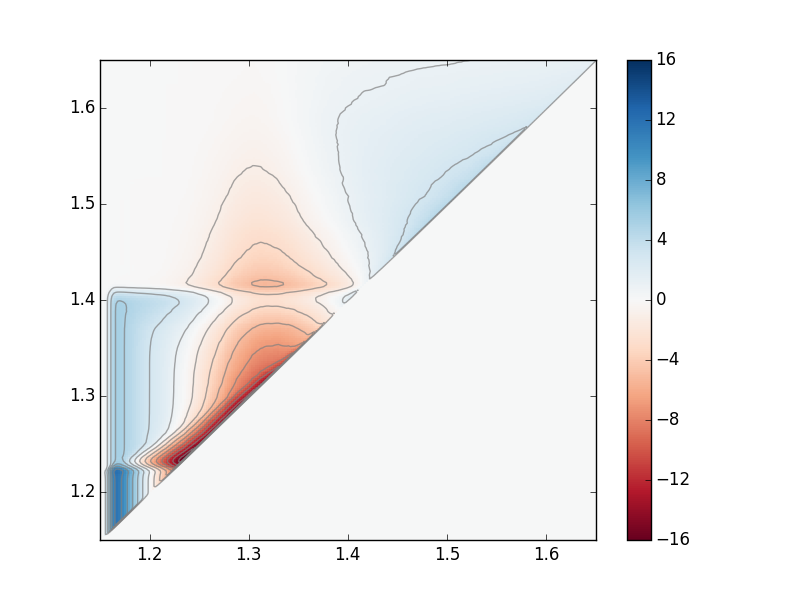} 
\end{center}
\caption{Depiction of the first and second principal component functions for 1- and 2-dimensional persistent homology rank functions from 36 cubical subsets with side $28.0$. (a) First principal component for $\beta_1$, (b) first principal component for $\beta_2$, (c)  
second principal component for $\beta_1$, (d) second principal component for $\beta_2$. The principal component functions are scaled to have their inner product norm = 1.  
Contours are drawn at levels $h = -15, -13, \ldots, -1, 1, \ldots, 13, 15$ in each plot.}
\label{fig:beadpack_first_second_princ_comps}
\end{figure}

\begin{figure}[h]
\begin{center}
(a)\includegraphics[width=0.4\textwidth]{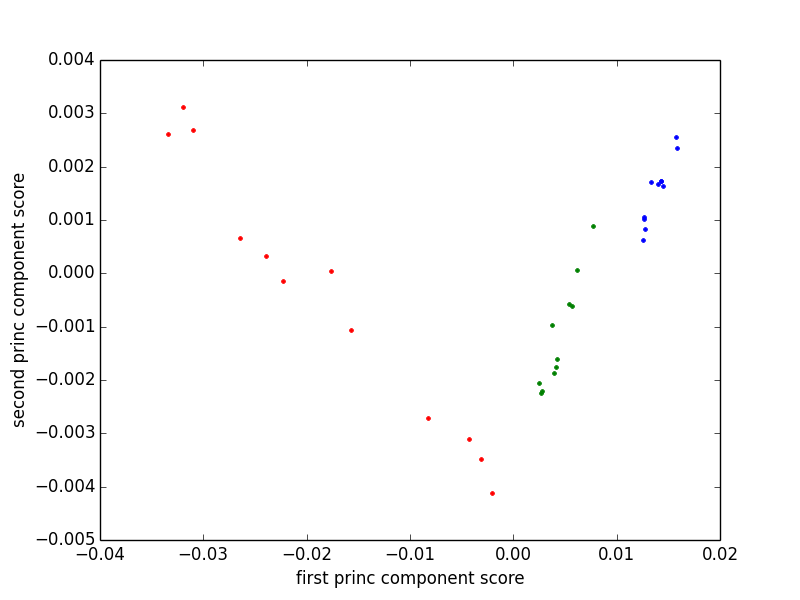}
	(d)\includegraphics[width=0.4\textwidth]{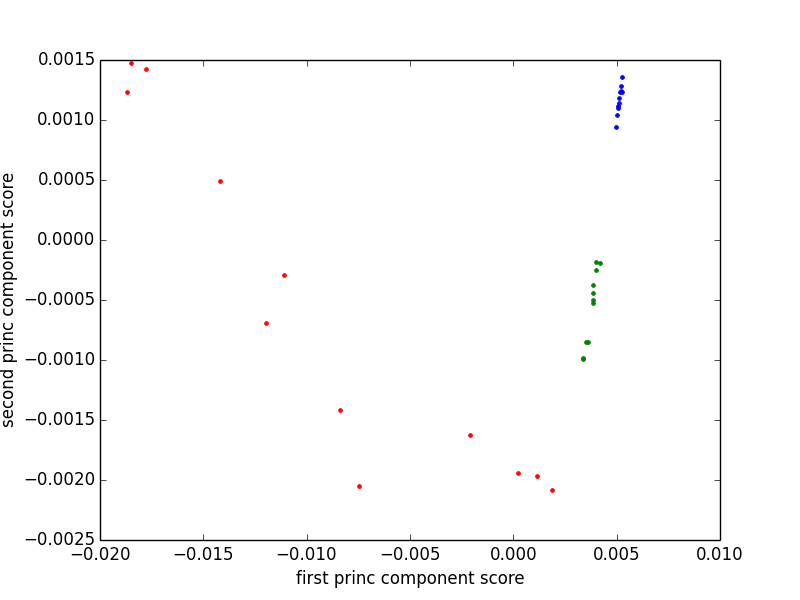} \\
(b)\includegraphics[width=0.4\textwidth]{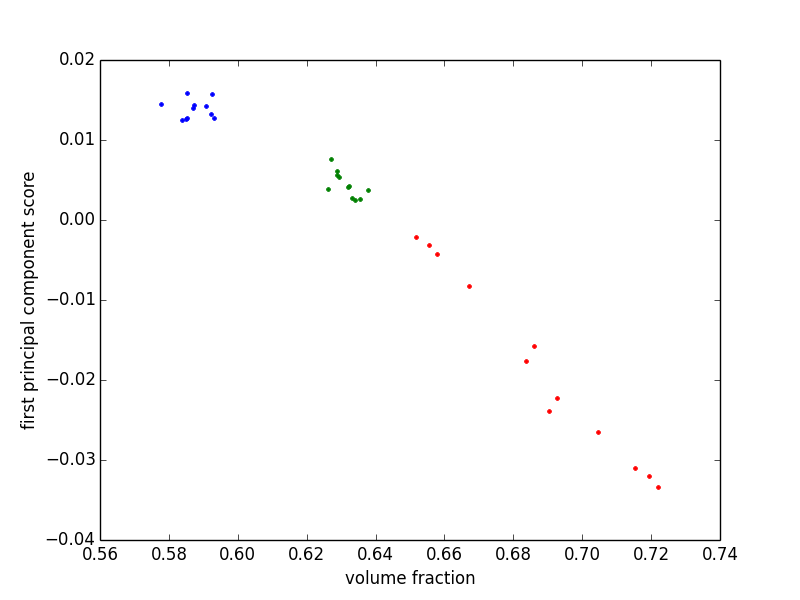}
	(e)\includegraphics[width=0.4\textwidth]{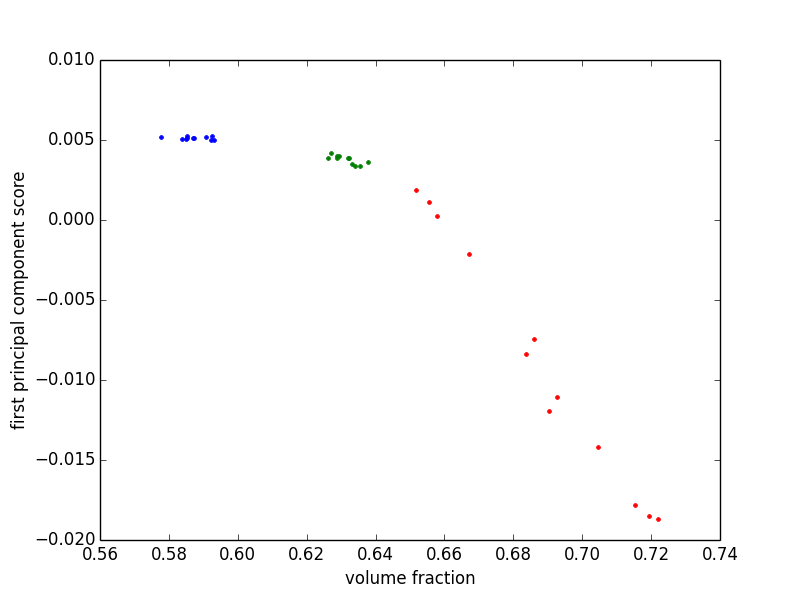} \\
(c)\includegraphics[width=0.4\textwidth]{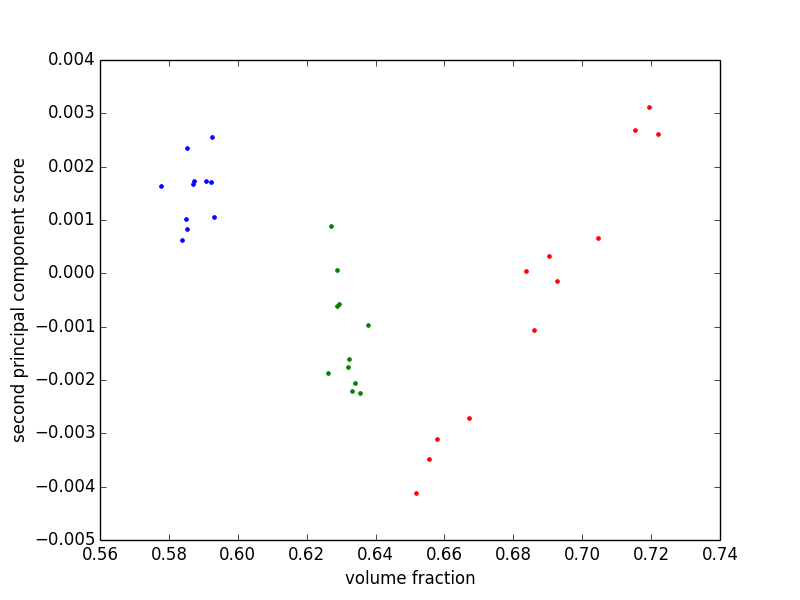}	
	(f)\includegraphics[width=0.4\textwidth]{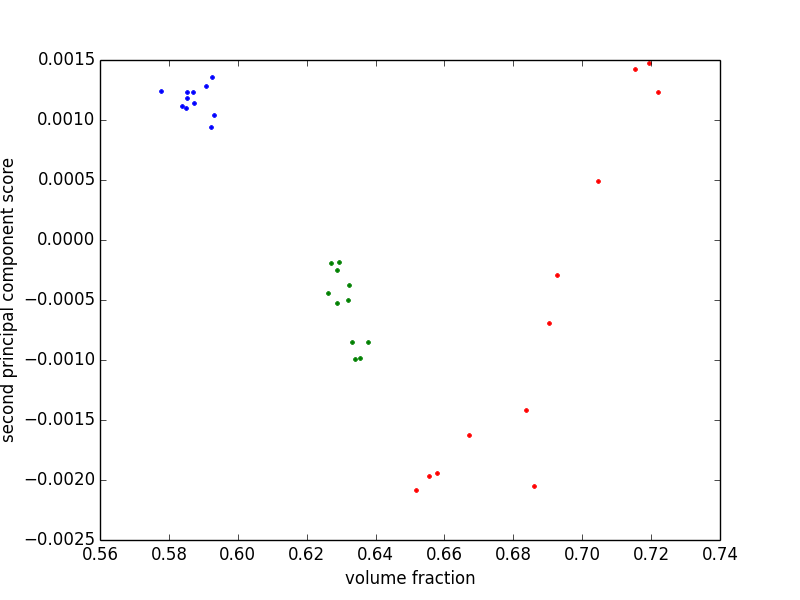}
\end{center}
\caption{ First and second principal component scores for the 36 rank functions. 
(a) First versus second p.c. scores for $\beta_1(s,t)$.  (b) Volume fraction versus first p.c. score for $\beta_1(s,t)$. (c) Volume fraction versus second p.c. score for $\beta_1(s,t)$.  
(d) First versus second p.c. scores for $\beta_2(s,t)$.  (e) Volume fraction versus first p.c. score for $\beta_2(s,t)$. (f) Volume fraction versus second p.c. score for $\beta_2(s,t)$.  
Blue dots are data for subsets from experiment $A$, green dots from $B$ and red dots are from the third, partially crystallised, packing $C$.}
\label{fig:beadpack_PCA_scores}
\end{figure}

The next step in our analysis is to use the structure in the principal component functions to derive simpler order parameters for the bead packings.  
The essential idea is that the locations of the maxima or (negative) minima in the principal component functions indicate the most important geometric parameters for capturing the variation in the rank functions.   
We start with the first principal component for $\beta_2(s,t)$, Fig.~\ref{fig:beadpack_first_second_princ_comps}(b). 
The maximum value occurs as a fairly flat triangular region below the tetrahedral point $(1.15, 1.22)$.
We tested the correlation between $\beta_2(s,t)$ and first p.c. scores for values of $(s,t)$ in this region and found the best linear relationship between them when $(s,t)$ = $(1.20, 1.20)$, see Fig.~\ref{fig:beadpack_analysis}. 
Geometrically, the value of $R_2 = \beta_2(1.20,1.20)$ simply counts the number of enclosed voids in the alpha-shape at radius = 1.20. 
For the bead packings, this means it counts configurations of beads that are close to being regular tetrahedra and regular octahedra.  
This parameter, $R_2$ is therefore similar to the various measures of ``tetrahedrisity'' and ``quartoctahedrisity'' discussed in~\cite{anikeenko_shapes_2009}.    
Our analysis confirms that these quantities are the most important measures of variation in the local configurations of bead packings.  

The second principal component for $\beta_2(s,t)$ has a maximum in a narrow strip around $s = 1.16$, $1.16< t < 1.22$ and a minimum near $s=t=1.235$.  
This suggests an appropriate index is the difference $D_2 = \beta_2(1.167, 1.167) - \beta_2(1.235,1.235)$. 
As shown in Fig.~\ref{fig:beadpack_analysis}, this index has two linear scaling relationships with second p.c. scores, for bead packs above (subsets of packing $C$) and below (subsets of $A$ and $B$) the Bernal limit, $\phi = 0.64$.  
The value of $\beta_2(1.167, 1.167)$ counts the tetrahedra and octahedral configurations that are very close to being regular in the sense that all their faces have circumradii within $1\%$ of that for an equilateral triangle.  
The alpha-shape at radius $1.235$ has filled in all the regular tetrahedra, so $\beta_2(1.235,1.235)$ counts some quasi-regular tetrahedra, the regular octahedra and some quasi-regular octahedral configurations.  
The difference index $D_2$ therefore counts the highly regular tetrahedra, minus some quasi-regular tetrahedral and octahedral configurations.  
The physical significance of this index is unclear, but the two different linear scaling regimes suggests there are different processes relevant to the packings above and below the Bernal limit.  

A similar analysis of the 1-dimensional rank functions supports the findings above.  
The first principal component for $\beta_1(s,t)$ has a fairly flat maximal region below the equilateral triangle point at $(1.0, 1.15)$. 
The values of $R_1 = \beta_1(1.05,1.05)$ have a clear linear relationship with the first p.c. scores, see Fig.~\ref{fig:beadpack_analysis}. 
The quantity $R_1$ is analogous to the 3-point correlation function analysed in~\cite{lochmann_statistical_2006}. 
The structure in the second p.c. for $\beta_1(s,t)$ again suggests an appropriate index could be the difference 
$D_1 = \beta_1(1.028,1.150) - \beta_1(1.00, 1.13) $.  
 As for $D_2$, we see multiple roughly linear scaling relations for subsets from different experiments.  

\begin{figure}[h]
\begin{center}
%
(a)\includegraphics[width=0.4\textwidth]{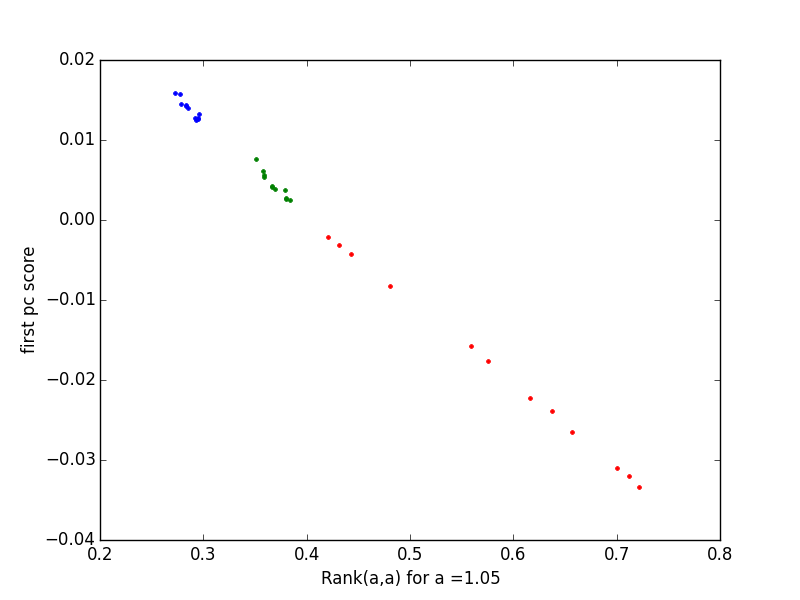}
	(d)\includegraphics[width=0.4\textwidth]{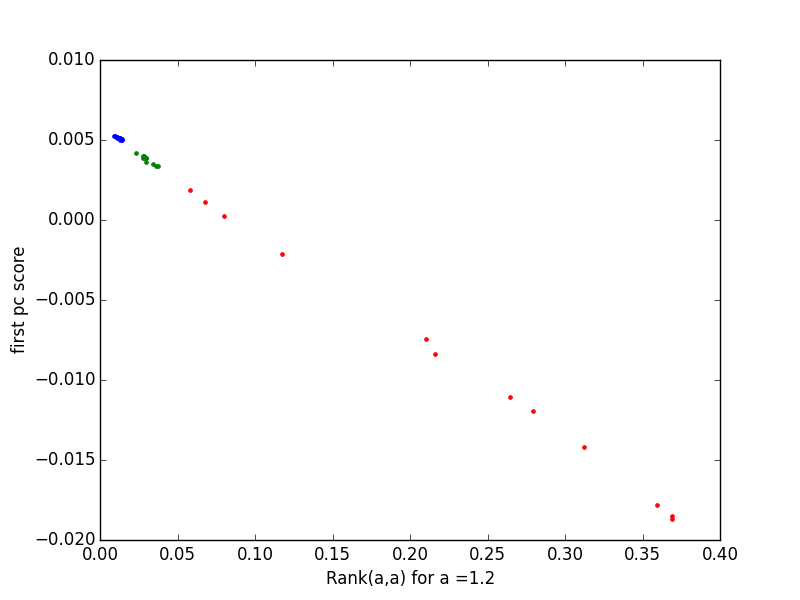}\\
(b)\includegraphics[width=0.4\textwidth]{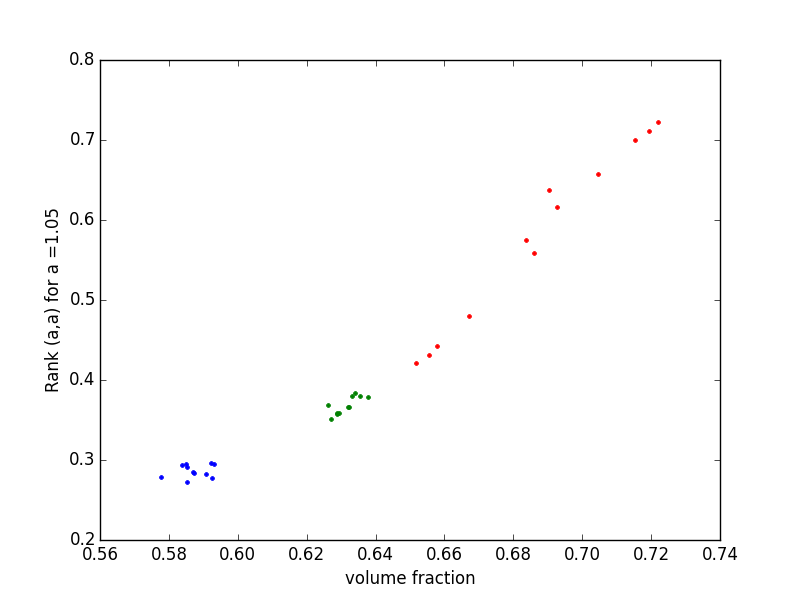}
	(e)\includegraphics[width=0.4\textwidth]{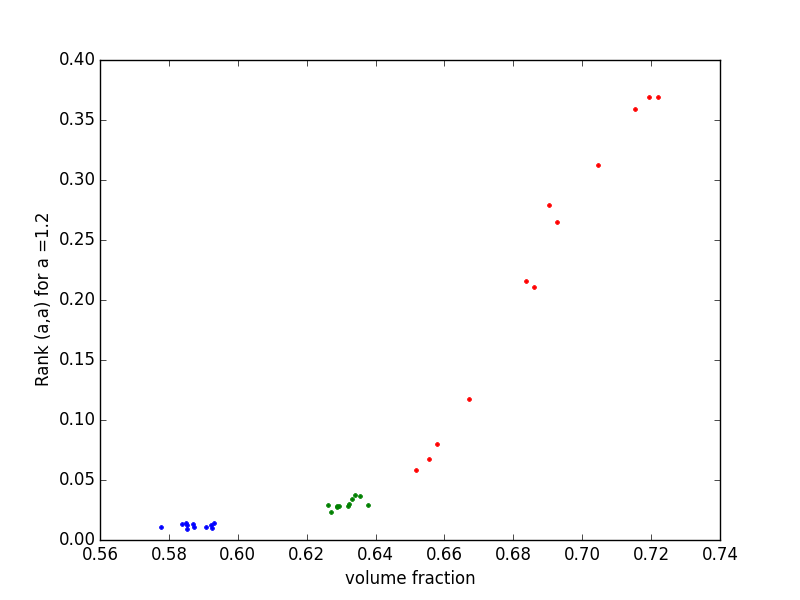} \\
(c)\includegraphics[width=0.4\textwidth]{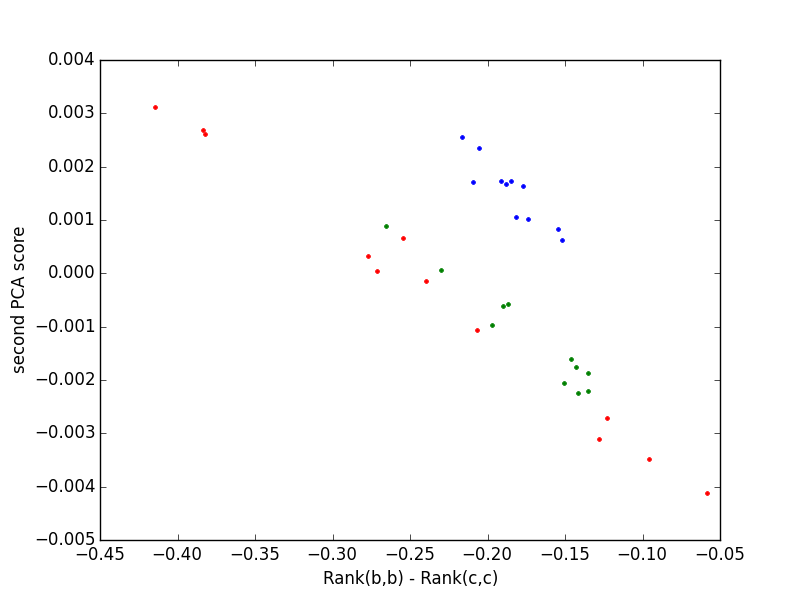}
	(f)\includegraphics[width=0.4\textwidth]{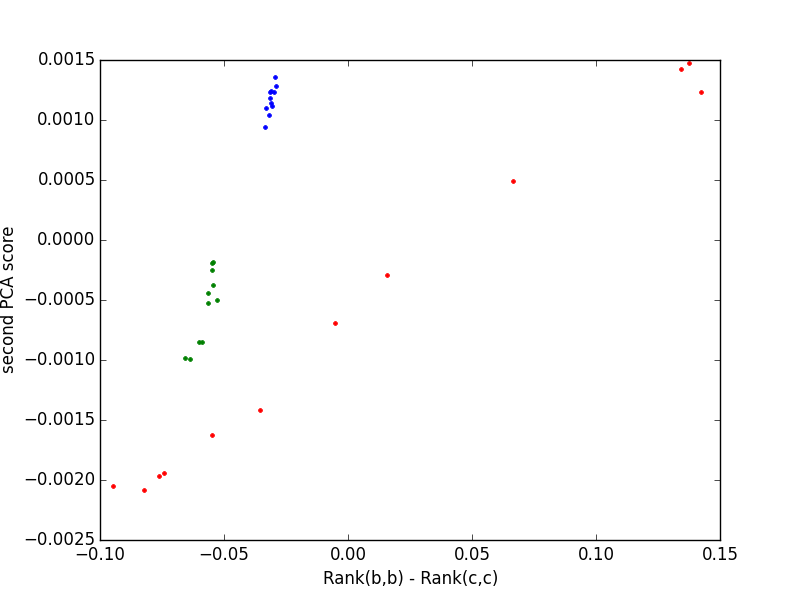}
\end{center}
\caption{ Geometric indices that correlate with first and second p.c. scores.   
(a) $R_1 = \beta_1(1.05,1.05)$ versus first p.c. scores of $\beta_1(s,t)$ for the 36 subsets. (b) Volume fraction versus $R_1$. (c) $D_1$ versus second p.c. score for $\beta_1(s,t)$.  
(d)  $R_2 = \beta_2(1.20,1.20)$ versus first p.c. scores of $\beta_2(s,t)$ for the 36 subsets. (e) Volume fraction versus $R_2$. (f) $D_2$ versus second p.c. score for $\beta_2(s,t)$.  
Blue dots are data for subsets from experiment $A$, green dots from $B$ and red dots are from the third, partially crystallised, packing $C$.}
\label{fig:beadpack_analysis}
\end{figure}

\clearpage


\section{Conclusions and future directions}

This paper has shown how the persistent homology rank function can be used as a summary statistic for distinguishing between spatial point processes generated by different models, and as the basis for a functional principal component analysis of point patterns arising in physical systems.    
Our results establish the basic techniques needed for further applications.  
For example, the work in Section~\ref{sec:CSR} could be extended to powerful topological summary statistics for spatial point patterns by using PCA or a machine learning approach for parameter estimation to determine which model a point pattern is likely to be drawn from.
In this paper we limited ourselves to \v Cech complexes when building the filtrations for persistence computations.  
It would be interesting to use filtrations derived from kernel density functions of point patterns as we believe this would be particularly powerful when looking at clustering models.

Our analysis of point patterns from colloids and sphere-packings can be extended to packings of non-spherical grains, or to any scalar function of interest via the connection that exists between Morse theory and persistent homology~\cite{edelsbrunner_hierarchical_2003}.  
As mentioned in Section~\ref{sec:background}, a general method for building a filtration of a space $X$ is via the sublevel sets of a continuous function $f: X \to \R$.  
To study the geometry of packings of non-spherical grains or any porous material, we can use the distance function defined by the shortest distance from each point in space to the surface of the grains.  
Other physical applications might study a function that specifies the local concentration or density of a particular phase, an energy potential, and so on.  
In this context a Gaussian random field plays the null hypothesis role analogous to a completely spatially random point process.  

Finally, there remain theoretical questions about analysis using persistent homology rank functions to explore. 
These include how robust the pairwise distances between the rank functions are under different choices of the weighting function $\phi$ (\ref{metric}) and whether there is  an optimal choice of $\phi$ under various scenarios. 
We also conjecture there are convergence properties for persistent homology rank functions constructed from point patterns over larger and larger domains after normalizing by volume. 
This should be particularly apparent where the point patterns in distant regions are independent. 
Complications arise from both boundary effects and long-range dependence, however.   
Even when the point patterns in disjoint regions are independent there can be homology classes that involve long-range behaviour.
This fact is one of the main obstacles to the statistical analysis of topological quantities, but also the reason for their sensitivity to physically-relevant non-local properties.


\bibliographystyle{unsrt}
\bibliography{PCAofPHRF}

\end{document}